\newtheorem{theorem}{Theorem}[section]
\newtheorem{lemma}[theorem]{Lemma}
\newtheorem{proposition}[theorem]{Proposition}
\newtheorem{corollary}[theorem]{Corollary}
\theoremstyle{remark}
\newtheorem{remark}[theorem]{Remark}
\newtheorem{example}[theorem]{Example}
\newtheorem{definition}[theorem]{Definition}
\newtheorem{conventions}[theorem]{Convention}
\numberwithin{equation}{subsection}
\newcommand{\on}{\operatorname}
\newcommand{\ol}{\overline}
\newcommand{\RR}{\mathbb{R}}
\newcommand{\CC}{\mathbb{C}}
\newcommand{\ZZ}{\mathbb{Z}}
\newcommand{\PP}{\mathbb{P}}
\newcommand{\QQ}{\mathbb{Q}}
\newcommand{\ii}{\mathbb{1}}
\newcommand{\inv}{^{-1}}
\newcommand{\tl}{\tilde}
\title%[]
{Birationality of Berglund--H\"ubsch--Krawitz Mirrors}
\author{Mark Shoemaker}
\address{Department of Mathematics, University of Michigan,
Ann Arbor, MI 48109-1043, U.S.A.}
\email{shoemama@umich.edu}
\date{28 September 2012}
\begin{document}

\begin{abstract}
We investigate a \emph{multiple mirror phenomenon} arising from Berglund--H\"ubsh--Krawitz mirror symmetry.  We prove that the different mirror Calabi--Yau orbifolds which arise in this context are in fact birational to one another.
%Given a Landau--Ginzberg singularity $(W,G)$ satisfying certain conditions, one may obtain a Calabi--Yau orbifold $[Z_W]$ as a hypersurface in a quotient of weighted projective space.  Berglund--H\"ubsh--Krawitz mirror symmetry 

%Given a hypersurface $Z_W$ defined by a Landau--Ginzberg model $(W,G)$, we prove that the different mirror orbifolds which occur by replacing the polynomial $W$ are birational to one another.  
%In the process we relate the  Berglund--H\"ubsh--Krawitz mirror procedure  to Batyrev--Borisov mirror symmetry.

\end{abstract}

\maketitle

\small
\tableofcontents
\normalsize
%\newpage

\setcounter{section}{-1}

%{\bf To Do \begin{enumerate}
%%\item Fix notational discrepancy between rays and ray generators
%%\item *****decide what to do about $(X_\Sigma, \mu_i \ldots )$ notation.  Here's an idea, how about rather than writing $(X_\Sigma, \{ \mu_0, \ldots, \mu_n\})$, just write $\{\sum_{i=0}^n \mu_i = 0\} \subseteq X_\Sigma$.  Well the problem is that we don't have a "mirror" procedure written out as explicitly now.  But that is probably just fine, the statement is now just that $\{\sum_{i=0}^n \nu_i = 0\} = \ZZ_{W^T}$
%%\item add explanation about orbifold being birational to its coarse space
%\item add intro...
%%\item add section comparing BHK to BB mirror symmetry
%\item bibliography
%\item change intro to question 2
%%\item Notational issues: \begin{enumerate}
%%%\item $[Z_W]$ vs $Z_W/\tl{G}]$, the problem being that the def of $G^T$ depends on $W$, so notation like $[Z_W/\tl{G^T}]$ is misleading when there are two different $W$'s.
%%%\item get rid of "invertible LG hypersurface"...
%%%\item $\tl{G^*}$.... speaks for itself
%%\end{enumerate}
%%\item Go through the main proof
%%\item check the fano thing for your corollary
%\end{enumerate}
%}

\section{Introduction}

Mirror symmetry has been a driving force in geometry and physics over the last twenty years.  Roughly speaking the mirror conjecture \cite{CK} predicts a deep relationship between certain pairs $(V, V^\circ)$ of Calabi--Yau $n$-folds.  At the level of cohomology for example, we have the isomorphism
\begin{equation}\label{rotate} H^{p,q}(V; \CC) \cong H^{n-p, q}(V^\circ; \CC).
\end{equation}
%Since the debut of mirror symmetry to mathematics, there have arisen many general techniques for constructing such mirror pairs.  We will focus specifically on
%%the Batyrev--Borisov (BB) mirror construction \cite{BB} and 
%the
%Berglund--H\"ubsh--Krawitz (BHK) mirror construction detailed in \cite{Kr}. 
%%where %in both cases
%\eqref{rotate} has been proven.  
The so called \emph{multiple mirror phenomenon} arises when a particular Calabi--Yau $V$ corresponds to more than one mirror $V^\circ$ via different mirror constructions.  In many cases there is no clear relationship between the different mirrors of $V$ which appear (see examples~\ref{ex1}, \ref{ex2} and~\ref{ex3}).  In this paper we investigate one such instance of the multiple mirror phenomenon coming from Berglund--H\"ubsh--Krawitz (BHK) mirror symmetry.  We prove that the different BHK mirrors are in fact birational. 
%and that BHK mirror symmetry coincides with BB mirror symmetry up to birational equivalence.

BHK mirror symmetry for invertible singularities together with the LG/CY state space correspondence of Chiodo--Ruan allows us to construct a wealth of examples of ``mirror pairs'' of Calabi--Yau orbifolds arising as hypersurfaces in quotients of weighted projective space.  Let $(W,G)$ be a Landau--Ginzberg (LG) singularity.  That is $W$ is a nondegenerate quasi-homogeneous polynomial satisfying 
 \[W(\lambda^{c_0} X_0, \ldots, \lambda^{c_n} X_n) = \lambda^d W(X_0, \ldots, X_n)\]
 for $\lambda \in \CC^*$, and $G$ is a subgroup of $\on{Aut}(W)$.  Assume $W$ satisfies the \emph{Calabi--Yau} condition \[\sum_{i=0}^n c_i = d.\] Assume that $G$ is a subgroup of $SL_{n+1}(\CC)$ and contains the distinguished element \[j_W = \left( \begin{array}{c}
 e^{2 \pi i c_0/d} \\
  \vdots \\
  e^{2 \pi i c_n/d}
 
 \end{array}\right),\] acting as $j_W: (X_0, \ldots, X_n) \mapsto (e^{2 \pi i c_0/d}X_0, \ldots, e^{2 \pi i c_n/d} X_n)$.
Then the group $\tl{G} := G/\langle j_W \rangle$ acts on the projective stack $[\PP_W] :=  [\PP(c_0, \ldots, c_n)]$ and we obtain a Calabi--Yau orbifold
 \[ [Z_W] := \{W = 0\} \subset [\PP_W/\tl{G}] .\]
 If we assume additionally that $W$ is an invertible polynomial, we can apply BHK mirror symmetry to obtain a mirror LG singularity $(W^T, G^T)$, and a new Calabi--Yau \[ [Z_{W^T}] = \{W^T = 0\} \subset [\PP_{W^T}/\tl{G^T}].\]   This gives the mirror pair $[Z_W]$ and $[Z_{W^T}]$.  By a theorem of Chiodo--Ruan \cite{CR}, the Hodge diamonds of $[Z_W]$ and $[Z_{W^T}]$ are related by a $90^\circ$ rotation as in \eqref{rotate}.  We say $(W,G)$ is of \emph{Calabi--Yau-type} if $W$ satisfies the CY condition and $\langle j_W \rangle \leq G \leq SL_{n+1}(\CC)$.
 
 Consider two invertible CY-type theories $(W, G)$ and $(W', G)$ such that the weights $(c_0, \ldots c_n)$ of $W$ coincide with the weights of $W'$ and $G \leq \on{Aut}(W) \cap \on{Aut}(W')$.
%such that $[\PP_{W}/\tl{G}] \cong [\PP_{W'}/\tl{G'}]$.  
Then the spaces $[Z_W]$ and $[Z_{W'}]$ are both Calabi--Yau hypersurfaces in $[\PP(c_0, \ldots, c_n)/\tl{G}]$.  They are related by a smooth deformation and so give two representatives of the same mirror family.  Thus the respective BHK mirrors $[Z_{W^T}]$ and $[Z_{{W'}^T}]$ give two different mirrors of the mirror family of $[Z_W]$.  The relationship between these two orbifolds has been unknown up until now.  In this paper we resolve this question.
\begin{theorem}[$=$ Theorem~\ref{t:main}]
Let $(W, G)$ and $(W', G)$be  invertible and of CY-type such that the weights of $W$ and $W'$ are $(c_0, \ldots, c_n)$ and $G \leq \on{Aut}(W) \cap \on{Aut}(W')$ (so both $[Z_W]$ and $[Z_{W'}]$ are hypersurfaces in $[\PP(c_0, \ldots, c_n)/\tl{G}]$).  Then  
the respective BHK mirrors $[Z_{W^T}]$ and $[Z_{{W'}^T}]$ are birational.
%Let 
%$[Z_W]$ and $[Z_{W'}]$ be two CY hypersurfaces in $[\PP(c_0, \ldots, c_n)/\tl{G}]$ corresponding to LG theories $(W, G)$ and $(W', G)$ such that the weights of $W$ and $W'$ are $(c_0, \ldots, c_n)$ and $G \leq \on{Aut}(W) \cap \on{Aut}(W')$.  Then  
%the respective BHK mirrors $[Z_{W^T}]$ and $[Z_{{W'}^T}]$ are birational.
\end{theorem}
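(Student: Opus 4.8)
The plan is to produce an explicit birational map coming from a monomial change of coordinates and to realize it as an isomorphism of the dense tori of the two mirror orbifolds. Write $A = (a_{ij})$ and $A' = (a'_{ij})$ for the exponent matrices of the invertible polynomials $W$ and $W'$, so that the mirror $W^T$ has exponent matrix $A^T$, its $i$-th monomial being $\prod_j X_j^{a_{ji}}$ (the $i$-th column of $A$), and similarly for $(W')^T$. Since $W$ and $W'$ are quasi-homogeneous of the same weights $(c_0,\dots,c_n)$ and degree $d$, both satisfy $Ac = A'c = d\,\mathbb{1}$, and in particular $A'A^{-1}\in GL_{n+1}(\QQ)$ is well defined.

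First I would rewrite each BHK mirror as a global quotient of a torus hypersurface. The orbifold $[Z_{W^T}]\subset[\PP_{W^T}/\widetilde{G^T}]$ contains a dense open substack of the form $[(\{W^T=0\}\cap(\CC^*)^{n+1})/\hat K]$, where $\hat K\le(\CC^*)^{n+1}$ is the diagonal subgroup generated by the weight one-parameter subgroup of $\PP_{W^T}$ and by the dual group $G^T$. Because birationality of Deligne--Mumford stacks can be checked on dense open substacks, it suffices to produce an isomorphism of these torus quotients carrying one hypersurface to the other. Writing $M_X\subseteq\ZZ^{n+1}$ for the character lattice of $(\CC^*)^{n+1}/\hat K$, i.e.\ the characters of the big torus trivial on $\hat K$, the monomials of $W^T$ (the columns of $A$) lie in a single coset of $M_X$, and the Newton polytope of $W^T$ is their convex hull; analogously one has $M_Y$ and the columns of $A'$ for $(W')^T$.

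The key observation is that the $\QQ$-linear map $A'A^{-1}$, acting on the character lattice $\ZZ^{n+1}$ of the big torus, sends the $i$-th column of $A$ to the $i$-th column of $A'$ for every $i$, hence carries the Newton polytope of $W^T$ bijectively onto that of $(W')^T$ while matching the monomials one-to-one; dually this is a monomial map $\psi$ intertwining the two hypersurfaces. It remains to show that $A'A^{-1}$ restricts to an \emph{isomorphism of lattices} $M_X \xrightarrow{\sim} M_Y$ that also respects the finite ``stacky'' overlattice data recording $G^T$ and $(G')^T$. Granting this, $\psi$ descends to an isomorphism of the dense torus stacks interchanging the hypersurfaces, and restricting to the hypersurfaces gives the desired birational map $[Z_{(W')^T}]\dashrightarrow[Z_{W^T}]$.

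The hard part is exactly this lattice statement. Note $A'A^{-1}$ is typically not integral; indeed $|\det(A'A^{-1})| = |\on{Aut}(W')|/|\on{Aut}(W)|$ need not equal $1$, so the claim is genuinely that the \emph{sublattices} $M_X$ and $M_Y$ cut out by the two dual groups are matched integrally and bijectively, with the determinantal discrepancy absorbed by the difference between $G^T$ and $(G')^T$. This is where the precise Krawitz definition of the dual group is essential: $M_X$ is characterized through $A$ together with the annihilator conditions defining $G^T$ from the common group $G$, and the content is that dualizing the \emph{same} $G$ against $A$ versus $A'$ yields precisely the lattices interchanged by $A'A^{-1}$. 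I would organize the verification by standardizing both mirrors via Shioda-type coordinates, applying $A^{-1}$ and $A'^{-1}$ to send each Newton polytope to the standard simplex, thereby reducing to checking that the two quotient presentations of the resulting common Fermat-type hypersurface agree as lattices-with-group-action. The weight mismatch between $\PP_{W^T}$ and $\PP_{(W')^T}$ causes no trouble, since $M_X$ and $M_Y$ are the character lattices only after the weights have been quotiented out; establishing this lattice identification, compatibly with the torsion encoding the orbifold structure, is the crux of the argument.
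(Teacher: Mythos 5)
Your strategy is, at its core, the same as the paper's: both arguments come down to exhibiting the two mirrors as two different compactifications of one and the same torus hypersurface. The difference is one of coordinates. The paper builds a lattice $M$ (with dual $N$) depending only on the weights $(c_0,\ldots,c_n)$ and on $G$ --- not on the particular polynomial --- so that $Z_{W^T}$ and $Z_{(W')^T}$ are literally the closures of the single hypersurface $\{\sum_j \nu_j=0\}\subset T_M$ inside the toric varieties of two fans $\Sigma^\vee$, ${\Sigma'}^\vee$ living in the same $M$; the birational map is then the identity on $T_M$. You instead work in the ambient $\ZZ^{n+1}$ and write the identification of tori as the explicit rational monomial map $A'A^{-1}$. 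That map is the right one: since $A\vec{q}=A'\vec{q}$ is the all-ones vector (quasi-homogeneity with common fractional weights), $A'A^{-1}$ fixes the all-ones vector and hence carries each translated exponent vector $\on{col}_j A - (1,\ldots,1)^T$ to $\on{col}_j A' - (1,\ldots,1)^T$, which is exactly the canonical identification of the two embedded copies of $N$. But as written you have only \emph{asserted} the crux, not proved it.

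Concretely, the gap is the sentence beginning ``Granting this.'' The finite groups $\widetilde{G^T}$ and $\widetilde{(G')^T}$ live inside different ambient groups $\on{Aut}(W^T)$ and $\on{Aut}((W')^T)$, so there is no a priori comparison between them; essentially all of the content of the theorem is concentrated in the claim that dualizing the \emph{same} $G$ against $A$ and against $A'$ yields sublattices-with-torsion that $A'A^{-1}$ matches integrally. The paper proves exactly this in Proposition~\ref{p:ambient}, by identifying $\widetilde{G^T}\cong M/\langle\mu_0,\ldots,\mu_n\rangle$ and $\widetilde{(G')^T}\cong M/\langle\mu_0',\ldots,\mu_n'\rangle$ as quotients of the \emph{same} lattice $M$, whose independence of the choice of polynomial is Proposition~\ref{p:2.4} (resting on Lemmas~\ref{setup1} and~\ref{setup2}, including the non-obvious primitivity of the ray generators). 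Your proposed ``Shioda-type coordinates'' verification would presumably reproduce this computation, but until the lattice identification, together with the matching of the two finite quotient groups, is actually carried out, the proof is incomplete. A smaller remark: your reduction to dense open substacks is workable (the diagonal groups act freely on the big torus), but the paper's reduction to coarse spaces --- using that $[Z_{W^T}]$ is an effective orbifold, hence birational to its coarse space --- is simpler and lets you avoid discussing stacky birationality altogether.
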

H.\ Iritani first suggested that the above might be true.
 
 \begin{example}\label{ex1}
 Let \[{W_1} = X_0^5 + X_1^5 + X_2^5 + X_3^5 + X_4^5\] and let $G = \langle j_{W_1} \rangle$.  In this case $Z_{W_1}  = \{{W_1} = 0\} \subset \PP^4$ is the Fermat quintic threefold in $\PP^4$.  As is well known, in this case the BHK mirror coincides with the Batyrev--Borisov mirror (\cite{BB}).  ${W_1}^T = Y_0^5 + Y_1^5 + Y_2^5 + Y_3^5 + Y_4^5$ is again a Fermat polynomial and ${G_1}^T = SL_5(\CC) \cap \on{Aut}({W_1}) \cong (\ZZ/5\ZZ)^4.$  The mirror
 %space $[\PP_{{W_1}^T}/\tl{{G_1}^T}] = [\PP^4/(\ZZ/5\ZZ)^3]$, and 
 \[ [Z_{{W_1}^T}] = \{Y_0^5 + Y_1^5 + Y_2^5 + Y_3^5 + Y_4^5 = 0\} \subset [\PP^4/(\ZZ/5\ZZ)^3]\] is the well known \emph{mirror quintic} orbifold.
 \end{example}
 
 \begin{example}\label{ex2}
 Consider now the degree five homogeneous polynomial of \emph{chain type} described in Example 6 of \cite{CR}, \[{W_2} = X_1^4X_2 + X_2^4X_3 + X_3^4X_4 + X_4^4X_5 + X_5^5.\]  Again let ${G_2} = \langle j_{W_2} \rangle \;(=\langle j_{W_1} \rangle)$.  From the LG model $({W_2}, {G_2})$ we obtain another degree five hypersurface $Z_{{W_2}} = \{{W_2} = 0\} \subset \PP^4$. 
 %\[ [Z_{{W_2}}] = \{X_1^4X_2 + X_2^4X_3 + X_3^4X_4 + X_4^4X_5 + X_5^5 = 0\} \subset \PP^4.\]
 In this case however, \[{{W_2}}^T = Y_1^4 + Y_1Y_2^4+ Y_2Y_3^4 + Y_3Y_4^4 + Y_4Y_5^5\] is no longer homogeneous of degree five, but rather quasi-homogeneous:
 %weights $(c_0, \ldots, c_4; d) = (64, 48, 52, 51, 41; 256)$.
 \[{{W_2}}^T(\lambda^{64} Y_0, \lambda^{48}Y_1, \lambda^{52}Y_2, \lambda^{51}Y_3, \lambda^{41} Y_4) = \lambda^{256} {{W_2}}^T(Y_0, \ldots, Y_4).\]  One can check that in this case ${{G_2}}^T = \langle j_{{{W_2}}^T}\rangle$, thus we obtain a hypersurface in weighted projective space \[ [Z_{{{W_2}}^T}] = \{{{W_2}}^T = 0\} \subset [\PP(64, 48, 52, 51, 41)].\] 
 \end{example}
 
  \begin{example}\label{ex3}
Next consider the polynomial of mixed type \[{W_3} = X_1^4X_2 + X_2^5 + X_3^5 + X_4^5 + X_5^5.\]  Again letting ${G_3} = \langle j_{W_3} \rangle \;(=\langle j_{W_1} \rangle)$ we obtain a degree five hypersurface $Z_{{W_3}} = \{{W_3} = 0\} \subset \PP^4$. 
 %\[ [Z_{{W_3}}] = \{X_1^4X_2 + X_2^4X_3 + X_3^4X_4 + X_4^4X_5 + X_5^5 = 0\} \subset \PP^4.\]
 In this case, \[{{W_3}}^T = Y_1^4 + Y_1Y_2^5+ Y_3^5 + Y_4^5 + Y_5^5\] is quasi-homogeneous:
 %weights $(c_0, \ldots, c_4; d) = (64, 48, 52, 51, 41; 256)$.
 \[{{W_3}}^T(\lambda^{5} Y_0, \lambda^{3}Y_1, \lambda^{4}Y_2, \lambda^{4}Y_3, \lambda^{4} Y_4) = \lambda^{20} {{W_3}}^T(Y_0, \ldots, Y_4),\]  and ${{G_3}}^T = SL_5(\CC) \cap \on{Aut}(W_3) = \langle j_{{W_3}^T}, g_1, g_2 \rangle$, where \[ j_{{W_3}^T} = \left(\begin{array}{c}
 e^{2 \pi i ( 1/4) } \\
 e^{2 \pi i (3/20)} \\
 e^{2 \pi i (1/5)} \\
 e^{2 \pi i (1/5)} \\
 e^{2 \pi i (1/5)} \end{array} \right), \quad g_1 = \left(\begin{array}{c}
 1 \\
 1 \\
 e^{2 \pi i (1/5)} \\
 e^{2 \pi i (4/5)} \\
 1 \end{array} \right),  \quad g_2 = \left(\begin{array}{c}
1 \\
 1 \\
 1\\
 e^{2 \pi i (1/5)} \\
 e^{2 \pi i (4/5)} \end{array} \right).
 \]
 %The group $\tl{{G_3}^T} \cong (\ZZ/5\ZZ)^2$, thus
 In this case the mirror is a hypersurface in the quotient of weighted projective space \[ [Z_{{{W_3}}^T}] = \{{{W_3}}^T = 0\} \subset [\PP(5,3,4,4,4)/(\ZZ/5\ZZ)^2].\] 
 \end{example}

 In the above examples, the hypersurfaces $Z_{W_1}$, $Z_{W_2}$, and $Z_{W_3}$ are related by smooth deformations, and should therefore be considered as three different representatives of the same mirror family.  However we see that there is no obvious relationship between the respective mirrors $[Z_{{W_1}^T}]$, $[Z_{{W_2}^T}]$, and $[Z_{{W_3}^T}]$.

 %Let us phrase the question more generally.
 
 %A question naturally arises from this discussion.  
 %The first concerns the so-called \emph{multiple mirror phenomenon}.  
 
%Secondly, it is unclear how the above mirror construction relates to BB mirror symmetry.  To be more specific, given an LG model $(W,G)$, such that the (coarse moduli) space $\PP_W/\tl{G}$ is Gorenstein and Fano, we can apply BHK mirror symmetry to obtain 
%$[Z_{W^T}]$ as a mirror of $[Z_W]$.  On the other hand, if we let $V$ denote a crepant resolution $[Z_W]$, 
%BB mirror symmetry gives a mirror $V^\circ$.  One may ask what is the relationship between $[Z_{W^T}]$ and $V^\circ$.\footnote{The case of $\PP_W = \PP^{n}$ is the content of Theorem $5.1.1$ in \cite{Ba}.  
%%The general result will follow from the more general considerations of section \ref{toricBHK}
%}  As a corollary to the above theorem, we obtain
%\begin{theorem}[$=$ Corollary~\ref{c:cor}]
%Given an LG model $(W, G)$ such that $\PP_W/\tl{G}$ is Gorenstein, the BHK mirror $[Z_{W^T}]$ is birational to a Fermat representative of the BB mirror family.
%\end{theorem}

\subsection{Organization of the paper}
In section~\ref{s:background} we provide a brief introduction to %BB and 
BHK mirror symmetry and set notation for what follows.
In section~\ref{toricBHK} we relate $[Z_W]$ to $[Z_{W^T}]$ via a toric construction.
% reminiscent of the Batyrev mirror construction.  
 In section~\ref{theorems} we prove the birationality of the multiple mirrors.

\subsection{Acknowledgements}  The author would like to thank his advisor, Y.~Ruan for his tremendous help and guidance over the years.  He thanks P.~Clarke for first presenting the mirror construction of section~\ref{toricBHK} at the University of Michigan RTG Mirror Symmetry Workshop in the Spring of 2012.  He would also like to thank L.~Borisov, W.~Fulton, M.~Musta\c{t}\v{a}, and M.~Satriano for sharing their expertise.  The author was partially supported by NSF RTG grant DMS-0602191.

\section{Berglund--H\"ubsch--Krawitz mirror symmetry}\label{s:background}

%\subsection{Batyrev--Borisov mirror symmetry}
%
%The most well-known formulation of mirror symmetry, Batyrev--Borisov (BB) mirror symmetry, relates Calabi--Yau complete intersections in Gorenstein Fano toric varieties \cite{BB}.  For our purposes it will be enough to simply recall Batyrev's original construction, which can be viewed as a specialization to the particular case of toric hypersurfaces \cite{Ba}.  Namely, consider a lattice $N \cong \ZZ^{n+1}$ and $M = \on{Hom}(N; \ZZ)$.  Let $\Delta$ be a reflexive polytope in $M_\RR$.  Let $\Sigma$ denote a choice of \emph{maximal projective crepant partial (MPCP) desingularization} of $\Delta$, that is, $\Sigma$ is a maximal projective simplicial refinement of the normal fan of $\Delta$ satisfying $\Sigma(1) \subset \Delta^\circ - \{0\}$.
%Consider the toric variety $X_\Sigma$ refining $\PP_{\Delta}$.  $X_\Sigma$ is a Gorenstein Fano variety, thus its anticanonical bundle $-K_{\Sigma}$ is ample.  The zero set of a generic section of $-K_{\Sigma}$ defines a Calabi--Yau variety $V $ in $ X_\Sigma$.  
%
%The polar dual of $\Delta$, $\Delta^\circ \subset N_\RR$ is again a reflexive polytope.  Let $\Sigma^\circ $ denote an MPCP desingularization of $\Delta^\circ$.  Let $V^\circ $ in $ X_{\Sigma^\circ}$ be the zero set of a generic section of $-K_{\Sigma^\circ}$.  Then $V$ and $V^\circ$ give a Batyrev mirror pair.  
%
%\subsection{Berglund-H\"ubsh-Krawitz mirror symmetry}
\begin{conventions} \label{conv:1}
We work in the algebraic category.
The term \emph{orbifold} means ``smooth separated Deligne--Mumford stack
of finite type over $\mathbb{C}$.''

In our notation, stacks will always be written in brackets as $[X]$, and $X$ without brackets will denote the coarse underlying space.
\end{conventions}
Berglund-H\"ubsh-Krawitz (BHK) mirror symmetry is %another formulation %of mirror symmetry 
defined in terms of Landau-Ginzberg theories.  A Landau-Ginzberg (LG) theory is defined by a pair $(W, G)$ where $W = W(X_0, \ldots, X_n)$ is a nondegenerate quasi-homogeneous polynomial, and $G$ is a subgroup of $\on{Aut}(W)$, the group of diagonal symmetries of $W$.  By quasi-homogeneity we mean that there exist integers $c_0, \ldots, c_n$ such that for $\lambda \in \CC^*$, \[W(\lambda^{c_0} X_0, \ldots, \lambda^{c_n} X_n) = \lambda^d W(X_0, \ldots, X_n).\] We assume that $\gcd(c_0, \ldots, c_n)=1$.  $W$ is then quasi-homogeneous of degree $d$, with \emph{weights} $c_i$. We will also sometimes refer to the rational numbers $q_i = c_i/d$ as the \emph{fractional weights} of $W$.

BHK mirror symmetry applies when $W$ is a so-called \emph{invertible} polynomial, meaning the number of monomials is equal to the number of variables.  We can write \[W  = \sum_{i=0}^n \prod_{j=0}^n X_j^{e_{ij}}.\]  Let $E = \left(e_{ij}\right)$ denote the exponent matrix of $W$.  Let $E\inv = \left(e^{ij}\right)$.  %The columns $\left( e^{i1}, \ldots, e^{in} \right)$ 
Let $\rho_j$ denote the diagonal matrix whose $i^{th}$ diagonal entry is $\exp(2 \pi i e^{ij})$.  The group $\on{Aut}(W)$ is equal to $\langle \rho_0, \ldots, \rho_n\rangle$.  There is a distinguished element $j_W$ whose $i^{th}$ diagonal entry is $\exp( 2 \pi i q_i)$.  In fact $j_W = \rho_0 \cdots \rho_n$.  Given $G \leq \on{Aut}(W)$, the pair $(W, G)$ gives an invertible LG model.

The transpose polynomial $W^T$ is defined by transposing the exponent matrix $E$.  Namely
\[W^T := \sum_{j=0}^n \prod_{i=0}^n Y_i^{e_{ij}}.\]  As above, $\on{Aut}(W^T) = \langle \ol{\rho_0}, \ldots, \ol{\rho_n} \rangle$ where $\ol{\rho_i}$ is the diagonal matrix whose $j^{th}$ diagonal entry is $\exp( 2 \pi i e^{ij})$.  Define \[ G^T := \left\{ \prod_{i=0}^n \ol{\rho_i}^{s_i} \Bigg| \prod_{i=0}^n X_i^{s_i} \text{ is $G$-invariant} \right\}.\] 
The mirror LG model is given by $(W^T, G^T)$.  See~\cite{BH, Kr} for more information.

We may associate to $(W, G)$ a nondegenerate bigraded vector space, the \emph{Fan-Jarvis-Ruan-Witten state space} 
\[H_{FJRW}^*(W, G; \CC) = \bigoplus_{p,q} H_{FJRW}^{p,q}(W, G; \CC),\] 
defined by orbifolding the classical space of Lefschetz thimbles \cite{FJRW}.  The mirror theorem for BHK mirror symmetry then states that the Hodge diamond of $H_{FJRW}^*(W^T, G^T; \CC)$ is obtained from that of $H_{FJRW}^*(W, G; \CC)$ by performing a $90^\circ$ rotation \cite{Kr}.

A connection between Landau--Ginzberg theories and Calabi--Yau complete intersections is given by Chiodo and Ruan in \cite{CR}.  
They show that for a special class of LG models $(W, G)$, the state space $H_{FJRW}^*(W, G; \CC)$ is isomorphic to that of a Calabi--Yau hypersurface in (a quotient of) weighted projective space. Given a polynomial 
\[W  = \sum_{i=0}^m \prod_{j=0}^n X_j^{e_{ij}}\] and a group $G \leq \on{Aut}(W)$, assume that $G$ contains the distinguished element $j_W$.  Assume further that the weights $(c_0, \ldots , c_n)$ satisfy the 
\emph{Calabi--Yau condition} \[\sum_{i=0}^n c_i = d.\]  Let $[\PP_W]$ denote the weighted projective stack $ [\PP(c_0,\ldots, c_n)] = [(\CC^{n+1} - \{0\})/\CC^*]$ with $\CC^*$-weights $(c_0,\ldots, c_n)$.  
%Let $[Z_W] = \{W = 0\}$ the Calabi--Yau hypersurface in $[\PP_W]$.  
Define $\tl{G} := G/\langle j_W \rangle$.  $\tl{G}$ acts on $[\PP_W]$ and preserves the polynomial $W$.  Thus we can define the hypersurface \[[Z_W] := \{W = 0\} \subset [\PP_W/\tl{G}].\]

The theorem below relates the state space of $(W,G)$ to the Chen-Ruan cohomology of $[Z_W]$.

\begin{theorem}[LG/CY correspondence for hypersurfaces, \cite{CR}]\label{LGCY}

Given a Landau-Ginzberg theory $(W, G)$ such that $\sum_{i=0}^n c_i = d$ and $j_W \in G$, there exists an isomorphism of bigraded vector spaces \[H_{FJRW}^{p,q}(W, G; \CC) \cong H^{p,q}_{CR}( [Z_W]; \CC ).\]

\end{theorem}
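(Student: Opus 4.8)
The plan is to prove the isomorphism one sector at a time, exploiting the fact that both sides carry a canonical decomposition indexed by group elements together with a compatible bigrading, and that within each sector both reduce to the same Milnor/Jacobian ring. First I would recall the explicit model for the FJRW state space. Writing $g = \on{diag}(e^{2\pi i \theta_0}, \ldots, e^{2\pi i \theta_n}) \in G$ with each $\theta_i \in [0,1)$, let $I_g = \{ i : \theta_i = 0\}$ index the coordinates fixed by $g$, so that $\on{Fix}(g) = \CC^{I_g}$ and the restriction $W_g := W|_{\on{Fix}(g)}$ is again a nondegenerate invertible polynomial in the variables $\{X_i\}_{i \in I_g}$. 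The state space then decomposes as
\[ H^*_{FJRW}(W,G;\CC) = \bigoplus_{g \in G} \left( \on{Jac}(W_g) \cdot \omega_g \right)^G, \]
where $\on{Jac}(W_g) = \CC[X_i : i \in I_g]/(\partial_i W_g)$ is the Milnor ring, $\omega_g = \bigwedge_{i \in I_g} dX_i$, and the superscript denotes the $G$-invariant part. The bidegree of a class $m\,\omega_g$, with $m$ a monomial, is recorded by combining $\on{age}(g) = \sum_i \theta_i$ with the fractional weights $q_i = c_i/d$ attached to the monomial $m$.

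Second, I would write down the Chen--Ruan cohomology of $[Z_W] \subset [\PP_W/\tl{G}]$ through its inertia stack. Since $\on{Aut}(W)$, and hence $\tl{G}$, is abelian, the twisted sectors are indexed by elements $[h] \in \tl{G}$; the $h$-sector contributes $H^{*-2\on{age}(h)}$ of the fixed substack, which is itself a quasi-smooth Calabi--Yau hypersurface $\{W_h = 0\}$ cut out by the restriction $W_h$ inside a smaller weighted projective stack. The essential geometric input is the Griffiths--Steenbrink--Dolgachev residue description of the cohomology of a quasi-smooth weighted hypersurface: the primitive part of $H^*(\{W_h = 0\})$ is computed, via the residue map, by the graded pieces of the Jacobian ring $\on{Jac}(W_h) = \CC[X_i : i \in I_h]/(\partial_i W_h)$. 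This is precisely the Milnor ring appearing on the FJRW side, which is the bridge between the two theories.

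Third comes the matching. The surjection $G \twoheadrightarrow \tl{G} = G/\langle j_W \rangle$ organizes the FJRW sectors into fibers over the Chen--Ruan sectors; I would set up a bijection sending $g \in G$ to its image $[g] \in \tl{G}$ while tracking how the cosets $g\langle j_W\rangle$ distribute among the Hodge/Griffiths degrees of a single twisted sector. The hypothesis $j_W \in G$ is what makes this bookkeeping close up: the powers of $j_W$ sitting over the identity of $\tl G$ reproduce the untwisted cohomology of $[Z_W]$, including the classes pulled back from the ambient $[\PP_W/\tl{G}]$, while the remaining sectors match term by term. Under this correspondence the $G$-invariance imposed on $\on{Jac}(W_g)\,\omega_g$ translates exactly into the $\tl{G}$-invariance defining each Chen--Ruan twisted-sector cohomology.

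The hard part will be reconciling the two bigradings. The FJRW bidegree is built from the affine-space age $\on{age}_{\CC^{n+1}}(g)$ together with the monomial weights, whereas the Chen--Ruan bidegree combines the orbifold age $\on{age}_{[Z_W]}(h)$ with the Hodge degree supplied by the residue map; the two age conventions differ because one measures the $g$-action on $\CC^{n+1}$ and the other on the tangent space to the orbifold hypersurface. I would reconcile them using the weighted Euler sequence and the adjunction sequence for $[Z_W] \subset [\PP_W/\tl{G}]$, where the Calabi--Yau condition $\sum_i c_i = d$ forces the correction terms to cancel, so that $H^{p,q}_{FJRW}$ lands in $H^{p,q}_{CR}$ with no shift. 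The subtlety most in need of care is the treatment of the broad sectors, namely those $g$ for which $\on{Fix}(g)$ is positive-dimensional and $W_g$ has a nontrivial Milnor ring: there the comparison is not a single narrow class but an entire graded Jacobian ring, and one must verify that the residue isomorphism and the age correction remain compatible across the whole graded object simultaneously.
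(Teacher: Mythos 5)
Theorem~\ref{LGCY} is not proved in this paper at all: it is imported verbatim from Chiodo--Ruan \cite{CR}, so there is no internal proof to compare your attempt against. Measured instead against the actual argument of \cite{CR}, your outline has the right broad shape --- the sector decomposition of the FJRW space into $G$-invariants of Milnor rings $\on{Jac}(W_g)\,\omega_g$, the Griffiths--Steenbrink residue description of quasi-smooth weighted hypersurfaces, and the Calabi--Yau condition $\sum_i c_i = d$ as the mechanism that makes the degree shifts cancel are all genuine ingredients of that proof.

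However, there is a concrete error at the heart of your bookkeeping: the Chen--Ruan twisted sectors of $[Z_W]$ are \emph{not} indexed by elements of $\tl{G}$. Writing $[\PP_W/\tl{G}] = [(\CC^{n+1}\setminus\{0\})/\Gamma]$ with $\Gamma = \CC^*\cdot G$ (the $\CC^*$ embedded with weights $(c_0,\ldots,c_n)$, so $\Gamma/\CC^*\cong\tl{G}$ since $\CC^*\cap\on{Aut}(W) = \langle j_W\rangle$), the sectors are indexed by those $\gamma\in\Gamma$ whose fixed locus meets $\{W=0\}$, and a single class $[h]\in\tl{G}$ typically contributes many sectors, one for each lift $\lambda h$ with fixed points. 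This already bites when $\tl{G}$ is trivial: the mirror $[Z_{{W_2}^T}]\subset[\PP(64,48,52,51,41)]$ of Example~\ref{ex2} has numerous twisted sectors coming from the weights alone, whereas your scheme would assign it only the untwisted sector and pile all of $G = \langle j_W\rangle$ over it, leaving the broad FJRW sectors $j_W^k$ (positive-dimensional fixed locus, nontrivial Milnor ring) with nowhere to go. The correct matching sends $g\in G$ to $g$ viewed inside $\Gamma$: broad sectors whose fixed locus meets $Z_W$ match the \emph{primitive} cohomology of the sector hypersurface $\{W_g=0\}$ via residue, while narrow sectors with $\on{Fix}(g)=\{0\}$ --- which have no geometric fixed locus at all --- must be matched with the \emph{ambient} classes (powers of the hyperplane class) distributed across the various sectors. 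Your plan of spreading the coset $g\langle j_W\rangle$ over ``the Hodge/Griffiths degrees of a single twisted sector'' conflates these two lists and only coincidentally works in the quintic-like case $\PP^n$. A smaller slip in the same direction: the fixed-locus hypersurfaces $\{W_h=0\}$ are quasi-smooth but generally not Calabi--Yau, since the restricted weights need not sum to $d$. With the indexing corrected, the steps you flagged but deferred (the identification of the relative cohomology $H^*(\CC^{N_g}, W_g^{+\infty})$ with $\on{Jac}(W_g)\,\omega_g$, and the age/degree reconciliation) are exactly where the real work of \cite{CR} lies.
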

%This motivates the following definition. \begin{definition}
%A Landau-Ginzberg model $(W,G)$ satisfying the conditions of Theorem \ref{LGCY} will be called a \emph{Landau-Ginzberg (LG) hypersurface}. 
%\end{definition}  
%
Given $(W, G)$ as above, let us now assume that $W$ is an invertible polynomial.  One can show that $G \leq SL_{n+1}(\CC)$ if and only if $G^T$ contains ${j_{W^T}}$, the distinguished element of $W^T$.  Furthermore if $W$ satisfies the Calabi--Yau condition $W^T$ will also.  So in this case $(W^T, G^T)$ will also correspond to a hypersurface \[[Z_{W^T}] = \{W^T = 0\} \subset [\PP_{W^T}/\tl{G^T}].\]  The hypersurfaces $[Z_W]$ and $[Z_{W^T}]$ give a pair of mirror Calabi--Yau orbifolds.
Applying Theorem~\ref{LGCY} plus BHK mirror symmetry, we see that the standard relationship between the Hodge diamonds of mirror pairs holds:
%\[H^{p,q}_{CR}( [Z_W]; \CC) \cong H_{FJRW}^{p,q}(W, G; \CC) \cong H_{FJRW}^{n-1-p,q}(W^T, G^T; \CC) \cong H^{n-1-p,q}_{CR}( [Z_{W^T}]; \CC),\] 
\[
\begin{array}{ccc}
H^{p,q}_{CR}( [Z_W]; \CC)& & H^{n-1-p,q}_{CR}( [Z_{W^T}]; \CC)\\
\downarrow \cong & & \uparrow \cong \\
H_{FJRW}^{p,q}(W, G; \CC) &\xrightarrow{\cong} & H_{FJRW}^{n-1-p,q}(W^T, G^T; \CC)  
\end{array}.
\]

\begin{definition}
In this paper we deal only with Landau-Ginzberg theories $(W,G)$ of \emph{Calabi--Yau-type}, that is: 
\begin{enumerate}

\item[i.] the weights $(c_0, \ldots, c_n)$ satisfy the Calabi--Yau condition $\sum_{i=0}^n c_i = d$,
\item[ii.] $\langle j_W\rangle \leq G \leq SL_{n+1}(\CC)$.
\end{enumerate}
\end{definition}

\begin{conventions} Henceforth, we will always assume that our polynomial $W$ is invertible.

We will use the term \emph{BHK mirror symmetry} to refer both to the correspondence between the pairs $(W,G)$ and $(W^T, G^T)$, as well as the induced correspondence between $[Z_W]$ and $[Z_{W^T}]$.
\end{conventions}

%what is the relationship between the mirror symmetry just described relating the hypersurfaces $[Z_W]$ and $[Z_{W^T}]$ and BB mirror symmetry for hypersurfaces?  To be more specific, consider an LG model $(W,G)$ such that the $c_i$ divides $d$ for all $i$.  This is equivalent to requiring that $\PP_W = \PP(c_0, \ldots, c_n)$ be Gorenstein.  The condition that $G \leq SL_{n+1}(\CC)$ implies that $\PP_W/\tl{G}$ is also Gorenstein.  $\PP_W/\tl{G}$ is therefore a Gorenstein Fano toric variety, and can therefore be expressed as $\PP_{\Delta}$ for some reflexive polytope $\Delta$ in a lattice $M$.  Let $\Sigma$ be a projective subdivision of $\Sigma$ and let $f: X_\Sigma \to \PP_\Delta$ denote the corresponding resolution.  Then the pullback of $Z_W$ via $f$ defines a Calabi--Yau $V$ in $X_\Sigma$.  Then one can ask how the mirror $[Z_{W^T}]$ of $[Z_W]$ relates to $V' \subset X_{\Sigma '}$, the BB mirror of $V$.  A partial result is already known.  Namely, in the case where $W$ is a Fermat polynomial and $G = \langle j \rangle$, $\PP_{W^T}/\tl{G^T}$ is isomorphic to $\PP_{\Delta^\circ}$, and the pullback of $Z_{W^T}$ under $g: X_{\Sigma ' } \to \PP_{\Delta}$  coincides with a representative $V'$ in $X_{\Sigma ' }$.

\section{Rephrasing BHK mirror symmetry}\label{toricBHK}

%At their hearts, both Batyrev--Borisov and BHK mirror symmetry reduce  complex geometric relationships to simple combinatorial constructions.  In the case of BB mirror symmetry, this relies on constructing the dual polytope.  In the case of BHK mirror symmetry, one uses the transpose of the exponent matrix and its inverse to construct the transpose polynomial and transpose group.  We would like to relate these two combinatorial constructions.  This first involves expressing BHK mirror symmetry, at least on the level of the course spaces, in toric terms.  
BHK mirror symmetry reduces complex geometric relationships to a simple combinatorial construction based on transposing the exponent matrix and its inverse.  Using a construction due to Clarke, we can reformulate BHK mirror symmetry, at least on the level of coarse moduli spaces, into the language of toric varieties.  

\begin{remark}
In \cite{Cl}, Clarke gives a general construction which specializes to that detailed below.  
\end{remark}

\begin{remark}
In \cite{Bo}, Borisov gives a different way of rephrasing BHK mirror symmetry in toric terms.
\end{remark}

\subsection{Expressing $Z_W$ in toric language}\label{s:2.1}
Let $(W, G)$ be of CY-type.  Label the $i^{th}$ monomial of $W$ as $Y_i$, so  $W =  \sum_{i=0}^n Y_i = \sum_{i=0}^n \prod_{j=0}^n X_j^{e_{ij}}$.  We can view $X_0, \ldots, X_n$ as homogeneous coordinates on the toric variety $\PP_W/\tl{G}$, the coarse underlying space of the quotient $[\PP_W/\tl{G}]$.  Then $\{W = 0\}$ defines the hypersurface $Z_W$.

%.  Assume the weights $q_0, \ldots , q_n$ satisfy the Calabi--Yau condition $\sum_i q_i = 1$.  Let $G \subset \on{Aut}(W) \cap SL_{n+1}(\CC)$ be a subgroup containing the element $j = \exp(2 \pi i q_0, \ldots, 2 \pi i  q_n).$  Then the Landau-Ginzberg model $(W,G)$ satisfies the conditions of Theorem \ref{LGCY}.    In particular, we can view $X_0, \ldots, X_n$ as homogeneous coordinates on the toric variety $\PP_W/\tl{G} = \PP(d q_0,\ldots, d q_n)/\tl{G}$, and $\{W = 0\}$ defines a Calabi--Yau hypersurface.  In \cite{CR} the correspondence is proven. 

Note first that $\PP_W/\tl{G}$ is a normal toric variety, and can therefore by expressed as $X_\Sigma$ for $\Sigma$ a fan in a lattice $N \cong \ZZ^n$.  Because $X_\Sigma$ is a finite quotient of weighted projective space, the fan $\Sigma$ takes a specific form.  The following two lemmas will be useful
in describing such fans and their properties.

%In particular if $X_\Sigma$ is $n$-dimensional, $\Sigma(1)$ will consist of $n +1$ rays $\rho_0, \ldots , \rho_n$.  

First, it will be helpful in what follows to obtain an explicit description of the short exact sequence  %(see section 3 of~\cite{CK}) 
\begin{equation}\label{SES} 0 \to M \to \ZZ^{\Sigma(1)} \to A_{n-1}(X_\Sigma) \to 0\end{equation} for the divisor class group in the particular case of $X_\Sigma \cong \PP_W/\tl{G}$.

\begin{lemma}\label{setup1}
Let  $(W, G)$ be of CY-type and let $(c_0, \ldots, c_n)$ be the weights of $W$.  Then there exists a fan $\Sigma$ such that the corresponding toric variety $X_\Sigma \cong \PP_W/\tl{G}$, and the short exact sequence \eqref{SES} may be written as \[ 0 \to M \to \ZZ^{n + 1} \xrightarrow{\phi \oplus \tl{\phi}}  \ZZ \oplus \tl{G} \to 0,\] where $\phi$ is given by $(a_0, \ldots, a_n) \mapsto (c_0 a_0 + \cdots + c_n a_n)$.

\end{lemma}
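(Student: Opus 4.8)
The plan is to present $\PP_W/\tl{G}$ as a Cox-style geometric quotient of an open subset of affine space and then read off the class-group sequence \eqref{SES} as the character group of the quotienting group. Since every element of $G$ is a diagonal matrix, $G$ lies in the diagonal torus $(\CC^*)^{n+1}$; write $\hat{G}$ for its image there and let $\CC^*_{(c)} \subset (\CC^*)^{n+1}$ be the one-parameter subgroup $\lambda \mapsto (\lambda^{c_0}, \ldots, \lambda^{c_n})$. Because $\gcd(c_0, \ldots, c_n) = 1$ this is a faithful copy of $\CC^*$, and $\PP_W = (\CC^{n+1} - \{0\})/\CC^*_{(c)}$. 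The distinguished element $j_W$ is exactly the value of $\CC^*_{(c)}$ at $\lambda = e^{2\pi i /d}$, so $j_W \in \CC^*_{(c)} \cap \hat{G}$ and the residual $\tl{G} = G/\langle j_W\rangle$-action exhibits
\[ \PP_W/\tl{G} = (\CC^{n+1} - \{0\})/\hat{H}, \qquad \hat{H} := \CC^*_{(c)} \cdot \hat{G}. \]
As $\PP_W$ is simplicial and $\hat{H}$ is an extension of $\CC^*$ by a finite group, this is a normal simplicial toric variety $X_\Sigma$, with $\Sigma(1)$ the same $n+1$ rays as $\PP_W$, one for each coordinate $X_i$; thus $\ZZ^{\Sigma(1)} = \ZZ^{n+1}$.

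By Cox's description of the class group of a simplicial toric variety, the grading map $\ZZ^{\Sigma(1)} \to A_{n-1}(X_\Sigma)$ is dual to the inclusion $\hat{H} \hookrightarrow (\CC^*)^{n+1}$; that is, $A_{n-1}(X_\Sigma) = \on{Hom}(\hat{H}, \CC^*)$ and the map sends a monomial $a = (a_0, \ldots, a_n) \in \ZZ^{n+1}$ to the restriction to $\hat{H}$ of the character $\prod_i X_i^{a_i}$. Its kernel is $M$, the lattice of $\hat{H}$-invariant monomials. It therefore remains to compute $\on{Hom}(\hat{H}, \CC^*)$ together with this restriction map and to identify it with $\ZZ \oplus \tl{G}$ via $\phi \oplus \tl{\phi}$.

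The crucial input is the intersection $\CC^*_{(c)} \cap \hat{G}$. Any element of $\CC^*_{(c)}$ has the form $(\lambda^{c_0}, \ldots, \lambda^{c_n})$, and if it is a finite-order element of $G \leq SL_{n+1}(\CC)$ then its determinant is $\lambda^{c_0 + \cdots + c_n} = \lambda^d = 1$ by the Calabi--Yau condition; hence $\lambda \in \mu_d$ and the element is a power of $j_W$. Combined with $\langle j_W \rangle \leq G$, this gives $\CC^*_{(c)} \cap \hat{G} = \langle j_W \rangle \cong \ZZ/d$. Now restriction of characters to the two subgroups yields maps $r_1 \colon \on{Hom}(\hat{H}, \CC^*) \to \on{Hom}(\CC^*_{(c)}, \CC^*) = \ZZ$ and $r_2 \colon \on{Hom}(\hat{H}, \CC^*) \to \on{Hom}(\hat{G}, \CC^*) = G^\vee$, and since $\hat{H} = \CC^*_{(c)} \cdot \hat{G}$ the pair $(r_1, r_2)$ identifies $\on{Hom}(\hat{H}, \CC^*)$ with the fiber product $\ZZ \times_{(\ZZ/d)^\vee} G^\vee$ over the common restriction to $\langle j_W\rangle$. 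Tracing a monomial $a$ through $r_1$ gives $\lambda \mapsto \lambda^{c_0 a_0 + \cdots + c_n a_n}$, i.e. $r_1(a) = \phi(a)$.

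Finally I would unwind this fiber product. Projection to the $\ZZ$-factor is surjective, since characters of $G$ restrict onto those of $\langle j_W\rangle$, and its kernel is the set of characters of $G$ trivial on $\langle j_W\rangle$, namely $(G/\langle j_W\rangle)^\vee = \tl{G}^\vee$. Thus there is a short exact sequence $0 \to \tl{G}^\vee \to \on{Hom}(\hat{H}, \CC^*) \to \ZZ \to 0$, which splits because $\ZZ$ is free, giving $A_{n-1}(X_\Sigma) \cong \ZZ \oplus \tl{G}^\vee \cong \ZZ \oplus \tl{G}$. Under this identification the first component of the class map is $\phi$ and the second is the induced map $\tl{\phi} \colon \ZZ^{n+1} \to \tl{G}$, with kernel $M$, yielding \eqref{SES} in the asserted form. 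The step I expect to require the most care is the bookkeeping in the two preceding paragraphs: matching Cox's abstract grading sequence with the concrete restriction-of-characters map, and ensuring the finite part emerges as $\tl{G}$ (up to the harmless non-canonical self-duality of finite abelian groups) rather than some other extension. This is exactly where the hypotheses $G \leq SL_{n+1}(\CC)$ and $\sum_i c_i = d$ enter, through the determinant computation that pins down $\CC^*_{(c)} \cap \hat{G} = \langle j_W \rangle$.
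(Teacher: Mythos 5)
Your overall strategy (read off \eqref{SES} as the character group of the group acting in a Cox-style quotient presentation) is reasonable, but it has a genuine gap at the step where you declare that $(\CC^{n+1}-\{0\})/\hat{H}$ \emph{is} the Cox presentation of $X_\Sigma$, i.e.\ that ``$\Sigma(1)$ is the same $n+1$ rays as $\PP_W$'' with the grading map given by restriction of characters of $(\CC^*)^{n+1}$ to $\hat H$. That identification is valid only if the images $\nu_j$ of the standard basis vectors are the \emph{primitive} generators of the rays of $\Sigma$ in the lattice $N$ relevant to $\PP_W/\tl{G}$. This is false for general weights: e.g.\ for weights $(1,2,2)$ one has $\PP(1,2,2)\cong\PP^2$, whose class-group sequence has $\phi(a_0,a_1,a_2)=a_0+a_1+a_2$, not $a_0+2a_1+2a_2$; the quotient presentation with weights $(1,2,2)$ is simply not the Cox presentation, and $\on{Hom}(\hat H,\CC^*)$ computes the wrong grading. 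So the lemma is not a formal consequence of the quotient description --- it genuinely depends on $W$, and your argument never uses the nondegeneracy of $W$, which is the hypothesis that rules out such weights.

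This is exactly where the paper spends almost all of its effort. It proves primitivity twice: first in $N'=\ZZ^{n+1}/(c_0,\ldots,c_n)\ZZ$ (if $\nu_j$ were divisible by $k>1$ then $k\mid\gcd(c_0,\ldots,\widehat{c_j},\ldots,c_n)$, forcing every degree-$d$ monomial to contain $X_j$ and hence $X_j\mid W$, contradicting nondegeneracy), and then again in the suplattice $N\supset N'$ with $N/N'=\tl G$ (using $G\le SL_{n+1}(\CC)$ to see that each $\mu_i=Y_i/(X_0\cdots X_n)$ lies in $M$, and the existence of a monomial $Y_i$ of $W$ omitting $X_j$ to get $\langle\mu_i,\nu_j\rangle=-1$, which forces $\nu_j$ primitive in $N$). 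The second step is needed because a vector primitive in $N'$ can become imprimitive in a finite-index suplattice. Your remaining bookkeeping --- the determinant computation pinning down $\CC^*_{(c)}\cap\hat G=\langle j_W\rangle$ and the fiber-product/character-duality identification of the finite part with $\tl G$ --- is correct and is a clean repackaging of the paper's final step (applying $\on{Hom}(-,\CC^*)$ to $0\to M\to M'\to H\to 0$), but the proof is incomplete without the primitivity argument.
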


\begin{proof}

We will begin by constructing a fan $\Sigma ' $ in a lattice $N' \cong \ZZ^{n}$ such that  $X_{\Sigma'} \cong \PP_W$.
%We will first compute the case where $\tl{G}$ is trivial ($G = \langle j \rangle$).  
Define the lattice $N' := \ZZ^{n+1}/(c_0, \ldots, c_n) \ZZ$ and let $M' : = \on{Hom}(N', \ZZ)$.  Let $e_j$ denote the standard basis in $\ZZ^{n+1}$ and let $\nu_j$ denote the image of $e_j$ in $N'$.  The fan $\Sigma'$ for $\PP(c_0, \ldots, c_n) = \PP_W$ consists of cones generated by proper subsets of $\{\nu_0, \ldots, \nu_n\}$.  Let $\rho_j := \RR_{\geq 0} \nu_j$ denote the associated ray in $N'_\RR$.

The relation $\sum_{j = 0}^n c_j \cdot \nu_j = 0$ holds in $N'$, thus we have a short exact sequence \begin{equation}\label{SESWP} 0 \to M' \to \ZZ^{n + 1} \xrightarrow{\phi}  \ZZ \to 0\end{equation} where $\phi$ is as above, and the map $M' \to \ZZ^{n+1}$ is given by 
\[m \mapsto ( \langle m, \nu_0 \rangle, \ldots, \langle m , \nu_n \rangle ).\]  To show that this short exact sequence is in fact the sequence \eqref{SES} for the divisor class group of $\PP_W$, we must prove that $\nu_j$ is the generator of $\rho_j$ for each $j$ or, equivalently, that each $\nu_j$ is primitive.  This is not always the case for weighted projective space, but it does in fact hold for the weights which arise in our context.  

It is a simple exercise 
%which we will leave to the reader 
to show that $\nu_j$ can be written as $k \cdot v$ for some $v \in N'$ if and only if $k |
\gcd( c_0, \ldots, \widehat{c_j},
%c_{i-1}, c_{i + 1}, 
\ldots, c_n).$  Now, assume to the contrary that for $\PP_W$ there exists a $j$ such that $\nu_j$ is not primitive.  Let $k = \gcd( c_0, \ldots, \widehat{c_j}, \ldots, c_n)$.  Then by the above $k >1$.  By assumption $\gcd( c_0, \ldots,  c_n) = 1$, so $k$ does not divide $c_j$, and therefore does not divide $d = \sum_{j = 0}^n c_j$.  But since $k$ divides $c_i$ for all $i \neq j$, every degree $d$ monomial in the homogeneous variables $X_0, \ldots, X_n$ must contain a factor of $X_j$.  This would imply that $X_j$ divides $W$, contradicting our assumption that $W$ is nondegenerate.  Therefore each $\nu_j$ is the generator of $\rho_j$, and \eqref{SESWP} is in fact the same sequence as \eqref{SES} for the fan $\Sigma '$.  This proves the lemma when $\tl{G}$ is trivial.

In the general case, we can construct the fan $\Sigma$ by first constructing $\Sigma' \subset N'$ as above and then embedding $N'$ in an appropriate suplattice $N$ such that $N/N' = \tl{G}$.  Let $\Sigma$ denote the image of $\Sigma'$ in $N$, it follows from standard toric geometry arguments that $X_{\Sigma } \cong \PP_W/\tl{G}$.  

We claim that $\nu_0, \ldots, \nu_n$ are still primitive in $N$.  Recall that points in $M'$ correspond to rational functions on $X_{\Sigma '}$, specifically, the point $m \in M'$ gives the rational function $\prod_{j=0}^n X_j^{\langle m, \nu_j\rangle}$.  Global sections of the anticanonical sheaf are given by rational functions in $X_0, \ldots, X_n$ with at most simple poles at $X_j = 0$ for $0 \leq j \leq n$.   The Calabi--Yau condition above guarantees that for each $i$, $Y_i/(X_0 \cdots X_n) = \prod_{j=0}^n X_j^{e_{ij}}/(X_0 \cdots X_n)$ is a rational function on $\PP_W$, and can thus be represented by a point $\mu_i \in M'$.

By assumption, $G \leq SL_{n+1}(\CC)$, thus $X_0 \cdots X_n$ is invariant under the action of $G$.  $Y_i$ is also $G$-invariant, so the rational function $Y_i/(X_0 \cdots X_n)$ descends to a function on $X_{\Sigma  }$.  Equivalently, for each $i$, $\mu_i$ is in $M = \on{Hom}(N, \ZZ)$.  Now assume that for some $j$, $\nu_j$ can be written as a multiple $\nu_j = k \cdot \tl{\nu_j} $ for some $\tl{\nu_j}$ in $N$.
By the nondegeneracy of $W$, there exists a monomial $Y_i$ in $W$ with no factors of $X_j$.  Thus $Y_i/(X_0 \cdots X_n)$ will have a factor of $X_j\inv$.  Thus $k \langle \mu_i, \tl{\nu_j}\rangle = \langle \mu_i, k\tl{\nu_j}\rangle = \langle \mu_i, \nu_j \rangle = -1$.  Therefore $k$ must be $\pm 1$.  Thus $\nu_j$ is primitive in $N$ for all $j$.

The above discussion implies that we can write \eqref{SES} for $X_\Sigma = \PP_W/\tl{G}$ as \[ 0 \to M \to \ZZ^{n + 1} \xrightarrow{\phi \oplus \tl{\phi}}  \ZZ \oplus H \to 0,\] where $\phi$ is given by $(a_0, \ldots, a_n) \mapsto (c_0 a_0 + \cdots + c_n a_n)$ and $H$ is a finite group.  We can identify $M'$ with the kernel of $\phi$, which results in the exact sequence \[ 0 \to M \to M' \xrightarrow{ \tl{\phi}}  H \to 0.\]
Applying $\on{Hom}( - , \CC^*)$ to the above sequence, we deduce $H \cong \ker( T_{N'} \to T_N) \cong N/N' = \tl{G}$.

\end{proof}

As a partial converse, we can explicitly describe the structure of $X_\Sigma$ as a quotient of weighted projective space when $\Sigma$ takes a certain specific form.  

%\begin{lemma}
%Let $\nu_0, \ldots, \nu_n$ be primitive elements of a lattice $N$.  Assume that the sequence \[ 0 \to M \xrightarrow{A} \ZZ^{n + 1} \xrightarrow{\phi \oplus \tl{\phi}}  \ZZ \oplus \tl{G} \to 0,\]
%
%is exact, where $A: m \mapsto (\langle m, \nu_0 \rangle, \ldots, \langle m, \nu_n \rangle)$ and  $\phi$ is given by ${\displaystyle (a_0, \ldots, a_n) \mapsto (c_0 a_0 + \cdots + c_n a_n)}$ ($c_i > 0$ for all $i$).  Let $N' \subset N$ denote the lattice generated by $\langle \nu_0, \ldots, \nu_n \rangle$.  Define the fan $\Sigma ' \subset N '$ by taking cones over proper faces of $\on{conv}(\nu_0, \ldots, \nu_n)$.  Let $\Sigma$ be the image of $\Sigma '$ in $N$.
%Then $X_{\Sigma '} \cong \PP(c_0, \ldots, c_n) $, and $X_{\Sigma} \cong \PP(c_0, \ldots, c_n)/(N/N') $ where $ N/N' \cong \tl{G}$.
%\end{lemma}
\begin{lemma}\label{setup2}
Let $\nu_0, \ldots, \nu_n$ be primitive elements of a lattice $N$.  Assume that the sequence \[ 0 \to M \xrightarrow{A} \ZZ^{n + 1} \xrightarrow{\phi \oplus \tl{\phi}}  \ZZ \oplus \tl{G} \to 0,\]
is exact, where $A: m \mapsto (\langle m, \nu_0 \rangle, \ldots, \langle m, \nu_n \rangle)$ and  $\phi: (a_0, \ldots, a_n) \mapsto (c_0 a_0 + \cdots + c_n a_n)$ ($c_i > 0$ for all $i$).  Define the fan $\Sigma  \subset N $ by taking cones over proper faces of $\on{conv}(\nu_0, \ldots, \nu_n)$.  Then $X_{\Sigma} \cong \PP(c_0, \ldots, c_n)/\tl{G}$.
\end{lemma}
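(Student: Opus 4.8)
The plan is to show that the fan $\Sigma$ defined here is exactly the kind of fan produced in Lemma~\ref{setup1}, so that the two lemmas are genuinely inverse to one another. Concretely, I would argue as follows. Set $N' := \ZZ^{n+1}/(c_0,\ldots,c_n)\ZZ$ and let $\nu_j'$ denote the image of the standard basis vector $e_j$ in $N'$. By the reasoning in Lemma~\ref{setup1} (using only positivity of the $c_i$ and the relation $\sum_j c_j \nu_j' = 0$), the fan $\Sigma'$ in $N'$ spanned by proper subsets of $\{\nu_0',\ldots,\nu_n'\}$ satisfies $X_{\Sigma'} \cong \PP(c_0,\ldots,c_n)$, and the sequence \eqref{SESWP} is the divisor-class sequence for $\PP(c_0,\ldots,c_n)$. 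The task is then to identify the given data $(N,\nu_0,\ldots,\nu_n)$ with a suplattice $N' \hookrightarrow N$ having $N/N' \cong \tl{G}$, so that $\Sigma$ becomes the image of $\Sigma'$ and the standard toric quotient argument yields $X_\Sigma \cong \PP(c_0,\ldots,c_n)/\tl{G}$.

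The key step is to produce the lattice map $N' \to N$ compatible with the $\nu_j$. I would obtain it by dualizing the hypotheses. Applying $\on{Hom}(-,\ZZ)$ to the hypothesized exact sequence and comparing with the dual of \eqref{SESWP}, the inclusion $\ker(\phi) = M' \hookleftarrow M$ (as the kernel of the second projection $\tl{\phi}$ on $M'$) dualizes to a surjection $N = M^\vee \to (M')^\vee = N'$; equivalently, restricting the surjection $\phi\oplus\tl\phi$ to $\ker\tl\phi$ and comparing kernels gives an inclusion $M \hookrightarrow M'$ with cokernel $\tl{G}$, whose dual is the desired embedding $N' \hookrightarrow N$ with quotient $\tl{G}$. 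Because $A$ sends $m$ to $(\langle m,\nu_0\rangle,\ldots,\langle m,\nu_n\rangle)$ and the analogous map for $N'$ sends $m'$ to $(\langle m',\nu_0'\rangle,\ldots)$, this dual map carries $\nu_j'$ to $\nu_j$, so the fan $\Sigma'$ maps onto $\Sigma$ as required. The primitivity hypothesis on the $\nu_j$ guarantees that each $\nu_j$ genuinely generates its ray in $N$, so no spurious stacky structure is introduced and the sequence \eqref{SES} for $X_\Sigma$ really is the given one.

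The main obstacle, as I see it, is bookkeeping: verifying that the abstract identification $N/N' \cong \tl{G}$ produced by dualizing actually matches the group $\tl{G}$ acting on $\PP(c_0,\ldots,c_n)$ in the geometric quotient, and that the resulting quotient $X_{\Sigma'}/(N/N')$ is $X_\Sigma$ rather than some partial resolution or coarsening. This is where I would invoke the standard fact (used already in Lemma~\ref{setup1}) that if $N' \subseteq N$ is a finite-index sublattice and $\Sigma$ is the image fan of $\Sigma'$, then $X_\Sigma \cong X_{\Sigma'}/(N/N')$ with the torus quotient $T_{N'} \to T_N$ having kernel $\on{Hom}(N/N',\CC^*)$. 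Matching $\on{Hom}(N/N',\CC^*) \cong \on{Hom}(\tl{G},\CC^*) \cong \tl{G}$ (the last isomorphism since $\tl{G}$ is a finite abelian group) closes the loop and shows the acting group is precisely $\tl{G}$. Once this identification is pinned down, the conclusion $X_\Sigma \cong \PP(c_0,\ldots,c_n)/\tl{G}$ follows immediately.
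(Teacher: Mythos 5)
Your proof is correct and takes essentially the same route as the paper: the paper's proof also introduces $N'$ (taken there as the sublattice of $N$ generated by the $\nu_j$), identifies $X_{\Sigma'}\cong\PP(c_0,\ldots,c_n)$ from the exactness of $0\to M'\to\ZZ^{n+1}\xrightarrow{\phi}\ZZ\to 0$, and concludes via $X_\Sigma\cong X_{\Sigma'}/(N/N')$ together with $N/N'\cong\tl{G}$, obtained exactly as you do from the sequence $0\to M\to M'\xrightarrow{\tl\phi}\tl{G}\to 0$. One small slip worth fixing: dualizing the inclusion $M\hookrightarrow M'$ (finite cokernel $\tl{G}$) yields the \emph{injection} $N'\hookrightarrow N$ with quotient $\on{Ext}^1(\tl{G},\ZZ)\cong\tl{G}$, not a surjection $N\to N'$; your subsequent ``equivalently'' clause already states this correctly.
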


%\begin{remark}
%Notice that by the arguments in the above proof, if $\Sigma \subset N$ is a fan such that $X_\Sigma \cong \PP_W/\tl{G}$ and the SES \eqref{SES} is of the form of the lemma, then letting $N' = \langle \nu_0 \ldots \nu_n \rangle$, $N/N' \cong \tl{G}$.
%\end{remark}
\begin{proof}
First let $N' \subset N$ denote the lattice generated by $\langle \nu_0, \ldots, \nu_n \rangle$.  
Define the fan $\Sigma ' \subset N '$ by taking cones over proper faces of $\on{conv}(\nu_0, \ldots, \nu_n)$.  Note that the inclusion $N' \rightarrow N$ maps $\Sigma '$ to $\Sigma$.  Letting $M' = \on{Hom}(N'; \ZZ)$, the sequence 
\[ 0 \to M' \to \ZZ^{n + 1} \xrightarrow{\phi}  \ZZ \to 0\]
is exact, where $\phi$ is as above.  Thus $X_{\Sigma '}$ is isomorphic to $\PP(c_0, \ldots, c_n)$.  By standard toric arguments, $X_{\Sigma} \cong X_{\Sigma '}/(N/N')$.  As in the proof of the previous lemma, the short exact sequence
\[ 0 \to M \to M' \xrightarrow{ \tl{\phi}}  \tl{G} \to 0\]
is exact, and after applying $\on{Hom}( - , \CC^*)$ to the above sequence, we deduce $N/N' \cong \ker( T_{N'} \to T_N) \cong  \tl{G}$.

\end{proof}

Assume we have a hypersurface $Z_W \subset \PP_W/\tl{G}$ corresponding to $(W,G)$.
Let $\Sigma \subset N$ be the fan constructed in lemma \ref{setup1} such that $X_\Sigma \cong \PP_W/\tl{G}$.  Let $M = \on{Hom}(N; \ZZ) $ denote the dual lattice.  Let
$\{\nu_0, \ldots, \nu_n\}  \subset N$ denote the primitive elements corresponding to the generators of $\Sigma(1)$.
%$\rho_0, \ldots, \rho_n$. 
Label them such that $X_j$ is the homogeneous coordinate corresponding to $\nu_j$.
As in the proof of lemma \ref{setup1}, each monomial $Y_i$ in $W$ corresponds to a rational function $Y_i / X_0 \cdots X_n$ on $X_\Sigma$, and therefore to a point $\mu_i \in M$.
The hypersurface $Z_W$ is defined by the vanishing of the section $\sum_{i = 0}^n \mu_i$ of the anticanonical sheaf. 
 
We can thus express the hypersurface $Z_W = \{W = 0\} \subset \PP_W/\tl{G}$ 
%rephrase the data of an LG model $(W,G)$ 
in toric language, as 
 a toric variety $X_\Sigma \cong \PP_W/\tl{G}$ plus $n+1$ points $\mu_0, \ldots, \mu_n \in M$ such that $\langle \mu_i, \nu_j\rangle + 1 = e_{ij}$.  We assemble this information in the form of a proposition as it is crucial to what follows.

\begin{proposition}\label{p:2.4}
Let $(W,G)$ be of CY-type with  $[Z_W] \subset [\PP_W/\tl{G}]$ the associated hypersurface.  There exists a fan $\Sigma \subset N$ plus $n + 1$ points $\mu_0, \ldots, \mu_n \in M$ such that $X_\Sigma \cong \PP_W/\tl{G}$, and $Z_W$ is defined by the vanishing of the section $\sum_{i = 0}^n \mu_i$ of $-K_\Sigma$.  Thus the coarse space $Z_W$ is determined completely by the toric variety $X_\Sigma$ and the choice of sections $\mu_0, \ldots, \mu_n \in M$.

Furthermore, the construction of the fan $\Sigma \subset N$ as well as the lattice $M$ depend only on the weights of $W$ and the choice of group $G$.
\end{proposition}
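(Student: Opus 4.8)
The plan is to assemble the proposition from Lemma~\ref{setup1} and the discussion immediately preceding it, and then to isolate the one genuinely new assertion, namely that $\Sigma$ and $M$ do not depend on the monomials of $W$.

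First I would apply Lemma~\ref{setup1} to obtain the fan $\Sigma \subset N$ with $X_\Sigma \cong \PP_W/\tl{G}$, its primitive ray generators $\nu_0, \ldots, \nu_n$, and the dual lattice $M = \on{Hom}(N, \ZZ)$. For each monomial $Y_i = \prod_{j=0}^n X_j^{e_{ij}}$ of $W$, the Calabi--Yau condition ensures that $Y_i/(X_0 \cdots X_n)$ is a rational function on $\PP_W$ with at worst simple poles along the toric divisors, while $G \leq SL_{n+1}(\CC)$ ensures that both $Y_i$ and $X_0 \cdots X_n$ are $G$-invariant, so this rational function descends to $X_\Sigma$. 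Exactly as in the proof of Lemma~\ref{setup1}, this yields a point $\mu_i \in M$ characterized by $\langle \mu_i, \nu_j\rangle = e_{ij} - 1$ for all $j$.

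Next I would identify the zero locus of the section. Since $-K_\Sigma = \sum_j D_j$, a global section is a rational function with at worst simple poles along each toric divisor, and under this correspondence a lattice point $\mu \in M$ of the anticanonical polytope represents the monomial $\prod_j X_j^{\langle \mu, \nu_j\rangle + 1}$. By the previous paragraph $\mu_i$ represents $Y_i$, so $\sum_{i=0}^n \mu_i$ represents $\sum_{i=0}^n Y_i = W$, and its vanishing cuts out exactly $\{W = 0\} = Z_W$. This proves that $Z_W$ is recovered from the pair consisting of $X_\Sigma$ and the points $\mu_0, \ldots, \mu_n$, giving the first two assertions.

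The substantive step, which I expect to be the main obstacle, is the final claim that $\Sigma$ and $M$ depend only on the weights and on $G$, and not on the exponent matrix $E$. Tracing the construction of Lemma~\ref{setup1}, the lattice $N' = \ZZ^{n+1}/(c_0, \ldots, c_n)\ZZ$, the images $\nu_j$, and the fan $\Sigma'$ of cones over proper subsets of $\{\nu_0, \ldots, \nu_n\}$ visibly involve only the weights. It therefore remains to check that the overlattice $N \supset N'$ with $N/N' \cong \tl{G}$ is intrinsic to $(c_0, \ldots, c_n)$ and $G$. Write $\iota \colon \CC^* \to (\CC^*)^{n+1}$, $\lambda \mapsto (\lambda^{c_0}, \ldots, \lambda^{c_n})$, for the one-parameter subgroup defining $\PP_W$; its image is the kernel of the quotient $(\CC^*)^{n+1} \to T_{N'}$. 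Since $j_W = \iota(e^{2 \pi i/d})$ and an element $\iota(\lambda)$ lies in $\on{Aut}(W)$ precisely when $\lambda^d = 1$, one has $\on{Aut}(W) \cap \on{im}(\iota) = \langle j_W\rangle$, and hence $G \cap \on{im}(\iota) = \langle j_W\rangle$ because $j_W \in G$. Consequently $\tl{G} = G/\langle j_W\rangle$ injects as a finite subgroup of $T_{N'}$, and this subgroup is determined by $G$ and the weights with no reference to the monomials. Finally, under the standard correspondence between finite subgroups of $T_{N'}$ and finite-index overlattices of $N'$ (the subgroup being the kernel of $T_{N'} \to T_N$), this finite subgroup pins down $N$, and therefore $M$, uniquely. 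This completes the plan.
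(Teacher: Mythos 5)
Your proof is correct and follows essentially the same route as the paper, which presents Proposition~\ref{p:2.4} as an assembly of Lemma~\ref{setup1} together with the preceding discussion identifying each monomial $Y_i$ with a point $\mu_i \in M$ via $\langle \mu_i, \nu_j\rangle + 1 = e_{ij}$ and the section $\sum_i \mu_i$ of $-K_\Sigma$ with $W$. Your explicit verification of the final clause---that the overlattice $N \supset N'$ is determined by the image of $\tl{G}$ in $T_{N'}$, using $G \cap \operatorname{im}(\iota) = \langle j_W\rangle$, and hence depends only on the weights and $G$---is correctly argued and fills in a detail the paper leaves implicit.
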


\begin{remark}\label{r:toric language}  
We will denote the space $Z_W$ as constructed above by \[\bigg\{\sum_{i = 0}^n \mu_i = 0\bigg\} \subseteq X_\Sigma\] with the implicit assumption that the rational function $\sum_{i = 0}^n \mu_i $ is viewed as a section of $-K_\Sigma$.
%$\left(X_\Sigma, \{\mu_0, \ldots, \mu_n\}\right)$.
\end{remark}

\subsection{The BHK mirror construction using toric varieties}\label{s:2.2}

Applying BHK mirror symmetry to $(W,G)$ yields the pair $(W^T, G^T)$, which, again applying Theorem \ref{LGCY}, corresponds to a hypersurface $Z_{W^T} = \{W^T = 0\} \subset \PP_{W^T}/\tl{G^T}$.  In fact we can relate $Z_W$ to $Z_{W^T}$ via a toric construction from \cite{Cl} reminiscent of Batyrev's original mirror construction \cite{Ba}.
%We now describe a toric mirror construction which corresponds with BHK mirror symmetry.  Assume we are given an LG model $(W,G)$.  
As in the above remark, let $\{\sum_{i=0}^n \mu_i = 0\} \subseteq X_\Sigma$ be the toric data associated to $(W,G)$. We will first construct a dual fan $\Sigma^\vee \subset M$.  

The nondegeneracy of $W$ implies that $\{\mu_0, \ldots, \mu_n\} \subset M$ form the vertices of an $n$-dimensional simplex in $M$.  The nondegeneracy of $W^T$ guarantees each $\mu_i$ is primitive.  

\begin{definition}\label{d:dual}
Given $(W,G)$ of CY-type defining a hypersurface $\{\sum_{i=0}^n \mu_i = 0\} \subseteq X_\Sigma$, let $\Sigma^\vee \subset M$ denote the fan whose cones consist of the cones over proper faces of $\on{conv}(\mu_0, \ldots, \mu_n)$.  
\end{definition}

By lemma \ref{setup2}, $X_{\Sigma^\vee}$ is again isomorphic to a quotient of weighted projective space by a finite group.  

%In other words, the maximal cones $\{\sig_i\}_{i=0}^n = \Sigma^\vee(n)$ are given by $\sig_i = \{ \sum_{k = 0}^n a_k m_k \big| a_i = 0, a_k \geq 0 \text{ for } k \neq i\}$.  The set $\sig_i$ together with their faces form $\Sigma^\vee$. One can check (or see below) that $X_{\Sigma^\vee}$ 
We write the homogeneous coordinate ring of $X_{\Sigma^\vee}$ as $\CC[Y_0, \ldots, Y_n]$, where $Y_i$ is the coordinate associated to $\mu_i$.  Note that each $\nu_j$ now corresponds to a rational function on $X_{\Sigma^\vee}$.  Furthermore, we know $\langle \mu_i, \nu_j\rangle \geq -1$, so each $\nu_j$ gives a section of $-K_{\Sigma^\vee}$.
In fact the homogeneous function corresponding to the section $\sum_{j=0}^n \nu_j $ is exactly $W^T$:  
\begin{equation}\label{e:transpose}
 W^T =\sum_{j=0}^n\prod_{i = 0}^n Y_i^{\langle \mu_i, \nu_j \rangle + 1},\end{equation}
thus $W^T$ gives a quasi-homogeneous function on $X_{\Sigma^\vee}$ whose zero section defines a Calabi--Yau hypersurface.  This gives a ``mirror'' Calabi--Yau: \[\{W^T = 0\} \subset X_{\Sigma^\vee}.\]

\begin{remark}
Note that $X_{\Sigma^\vee}$, and therefore also our hypersurface, depend upon choices of $\mu_0, \ldots, \mu_n$, which are determined by our choice of quasi-homogeneous polynomial $W.$  In the case where $X_\Sigma = \PP_W/\tl{G}$ is a Gorenstein Fano variety and $W$ is a Fermat polynomial, this construction coincides with the Batyrev mirror construction.  More precisely, if $\Delta$ is a polytope in $M$ such that $\Sigma$ is the normal fan of $\Delta$, then in the above situation $\Sigma^\vee$ gives the normal fan of $\Delta^\circ$.
\end{remark}

We claim that this construction agrees with the BHK mirror construction.  Namely we will show that $X_{\Sigma^\vee} \cong \PP_{W^T}/\tl{G^T},$ and thus the 
%LG model $(W^T, G^T)$ 
hypersurface $Z_{W^T}$ coincides with $\{\sum_{j=0}^n \nu_j = 0\} \subseteq X_{\Sigma^\vee}$.

\begin{proposition}\label{p:ambient}
Given $(W,G)$ of CY-type, construct $\Sigma$ as in Lemma~\ref{setup1} such that $X_\Sigma \cong \PP_W/\tl{G}$, and define $\Sigma^\vee$ as in definition~\ref{d:dual}.  Then $X_{\Sigma^\vee} \cong \PP_{W^T}/\tl{G^T}.$
\end{proposition}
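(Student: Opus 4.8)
The plan is to apply Lemma~\ref{setup2} to the dual fan $\Sigma^\vee \subset M$, whose ray generators are the primitive vectors $\mu_0, \ldots, \mu_n$. By that lemma it suffices to produce an exact sequence
$$0 \to N \xrightarrow{\,B\,} \ZZ^{n+1} \xrightarrow{\,\psi \oplus \tl\psi\,} \ZZ \oplus \tl{G^T} \to 0,$$
where $B\colon \nu \mapsto (\langle \mu_0, \nu\rangle, \ldots, \langle \mu_n, \nu\rangle)$ and $\psi\colon (b_0, \ldots, b_n) \mapsto \sum_i c_i^T b_i$ is given by the weights $(c_0^T, \ldots, c_n^T)$ of $W^T$; the conclusion is then exactly $X_{\Sigma^\vee} \cong \PP(c_0^T, \ldots, c_n^T)/\tl{G^T} = \PP_{W^T}/\tl{G^T}$. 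Since the $\mu_i$ are already known to be primitive and to be the vertices of an $n$-simplex, they span $M_\QQ$, so $B$ is injective; the remaining content is to identify $\on{coker}(B)$ with $\ZZ \oplus \tl{G^T}$.

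First I would pin down the free $\ZZ$-summand, i.e.\ the weights. Writing $E\inv = (e^{ij})$, the fractional weights of $W$ are the row sums $q_i = \sum_j e^{ij}$ (since $j_W = \rho_0 \cdots \rho_n$), so by the symmetry of transposition the fractional weights of $W^T$ are the column sums $q_j^T = \sum_i e^{ij}$. Pairing $\sum_i q_i^T \mu_i$ against each $\nu_j$ and using $\langle \mu_i, \nu_j\rangle + 1 = e_{ij}$ gives $\sum_i q_i^T(e_{ij} - 1) = \sum_i q_i^T e_{ij} - \sum_i q_i^T$; here $E\inv E = I$ yields $\sum_i q_i^T e_{ij} = \sum_k (E\inv E)_{kj} = 1$, and the Calabi--Yau condition $\sum_k c_k = d$ yields $\sum_i q_i^T = \sum_k q_k = 1$, so this vanishes for every $j$. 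As the $\nu_j$ span $N_\QQ$ we conclude $\sum_i q_i^T \mu_i = 0$, hence $\sum_i c_i^T \mu_i = 0$ after clearing denominators. Therefore $\psi \circ B = 0$, and $\psi$ (surjective since $\gcd(c_i^T) = 1$) realizes the free factor of $\on{coker}(B)$.

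The main obstacle is the finite part: I must show that the torsion of $\on{coker}(B)$ --- equivalently, dualizing $B$, the quotient $M/\langle \mu_0, \ldots, \mu_n\rangle$ --- is isomorphic to $\tl{G^T}$. Here the combinatorics must be fed the definition of $G^T$. Under the isomorphism $M \cong \ker(\phi \oplus \tl\phi) \subset \ZZ^{n+1}$ of Lemma~\ref{setup1}, each $\mu_i$ becomes the row $(e_{i0} - 1, \ldots, e_{in} - 1)$ of $E - J$ (with $J$ the all-ones matrix), so $M/\langle \mu_i\rangle$ is computed from the row lattice of $E - J$ sitting inside $\ker(\phi \oplus \tl\phi)$. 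I would dualize the Lemma~\ref{setup1} sequence and match the resulting quotient against $G^T = \{\prod_i \ol{\rho_i}^{\,s_i} : \prod_i X_i^{s_i} \text{ is } G\text{-invariant}\}$, using that $G$-invariance of $\prod_i X_i^{s_i}$ is precisely an integrality (pairing) condition dual to the one cutting out $\tl G$, and that the class of $j_{W^T}$ corresponds to $s = (1, \ldots, 1)$, which lies in the relevant lattice exactly because $G \le SL_{n+1}(\CC)$. The delicate point is the bookkeeping through the non-saturated inclusion $\langle \mu_i\rangle \subsetneq M$: the finite groups $\tl G$ and $\tl{G^T}$ are mutually dual, and one must check that transposition carries one orthogonality relation onto the other without dropping a finite index.

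Conceptually, a cleaner way to organize this last step is to note that $(W^T, G^T)$ is itself of CY-type (its weights satisfy the CY condition, and $\langle j_{W^T}\rangle \le G^T \le SL_{n+1}(\CC)$ precisely because $G \le SL_{n+1}(\CC)$), so Lemma~\ref{setup1} produces a fan for $\PP_{W^T}/\tl{G^T}$ with primitive ray generators $\tl\nu_i$ and points $\tl\mu_j$ satisfying $\langle \tl\mu_j, \tl\nu_i\rangle + 1 = e_{ij}$. One then verifies that the dual data $(M, N, \{\mu_i\}, \{\nu_j\})$ attached to $(W, G)$ is isomorphic, as a pair of dual lattices with marked points, to the data $(\tl N, \tl M, \{\tl\nu_i\}, \{\tl\mu_j\})$ attached to $(W^T, G^T)$. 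Such an isomorphism matches $\Sigma^\vee$ with the fan of $\PP_{W^T}/\tl{G^T}$, and its existence is exactly the finite-group identification flagged above.
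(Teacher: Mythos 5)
Your overall strategy is the same as the paper's: apply Lemma~\ref{setup2} to $\Sigma^\vee$, which reduces the proposition to producing the exact sequence $0 \to N \xrightarrow{B} \ZZ^{n+1} \to \ZZ \oplus \tl{G^T} \to 0$. Your treatment of the free summand is correct and is essentially the paper's computation in dual form: the paper shows $\ol{d}\vec{p}^{\,T} B = \vec{0}^{\,T}$ using $E^T\vec p = \vec 1$ and $\sum_i p_i = 1$, which is the same identity as your $\sum_i q_i^T \mu_i = 0$.

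The genuine gap is that the identification of the torsion part $M/\langle \mu_0, \ldots, \mu_n\rangle$ with $\tl{G^T}$ --- which is the crux of the proposition and occupies most of the paper's proof --- is only described, not carried out. You correctly flag it as ``the delicate point,'' but neither of your two suggested routes is executed, and the second one (``one then verifies that the dual data \ldots is isomorphic'') simply restates the claim to be proved. Concretely, what is missing is the following chain of arguments from the paper: (i) using the Calabi--Yau condition, every class in $\tl{G^T}$ is represented by $\prod_i \ol{\rho_i}^{s_i}$ with $\prod_i X_i^{s_i}$ a $G$-invariant monomial of degree \emph{zero}, so that by \eqref{SESA} and $G \leq SL_{n+1}(\CC)$ the relevant exponent vectors $\vec s$ are exactly $\on{im}(A) \cong M$; (ii) such an element is trivial in $\tl{G^T}$ iff $(s_0,\ldots,s_n) + k(1,\ldots,1) = (a_0,\ldots,a_n)E$ for some $k$, and substituting $E = BA^T + \ii$ together with the degree-zero constraint forces $\vec s \in \on{im}(AB^T)$, giving $\tl{G^T} \cong \on{im}(A)/\on{im}(AB^T) \cong M/\on{im}(B^T) = M/\langle\mu_0,\ldots,\mu_n\rangle$. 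Finally, your proposal omits entirely the last step of the paper's proof: an abstract isomorphism of finite groups is not enough, and one must check that $H = M/\langle\mu_i\rangle$ and $\tl{G^T}$ act on $\PP_{W^T}$ in the same way (the paper does this by showing the two coordinate actions $\vec r$ and $\vec t = (E^T)\inv A m$ differ by a multiple of the weight vector $\ol{d}\vec p$, hence agree on the quotient). Without (i), (ii), and this compatibility of actions, the proposition is not proved.
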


\begin{proof}
Recall that $\{\nu_0, \ldots, \nu_n\} = \Sigma(1)$ and $\{\mu_0, \ldots, \mu_n\} \subset M$ denote the rational functions corresponding to the monomials $\{Y_0, \ldots, Y_n\}$.  We have the following short exact sequences: \begin{eqnarray} &0 \to M \xrightarrow{A} \ZZ^{\Sigma(1)} \to A_{n-1}(X_\Sigma) \to 0, \label{SESA} \\
&0 \to N \xrightarrow{B} \ZZ^{\Sigma^\vee(1)} \to A_{n-1}(X_{\Sigma^\vee}) \to 0. \label{SESB}
\end{eqnarray}

By lemma \ref{setup1} $A_{n-1}(X_\Sigma) \cong \ZZ \oplus \tl{G}$ and we can rewrite \eqref{SESA} as \begin{equation}
0 \to M \xrightarrow{A} \ZZ^{\Sigma(1)} \xrightarrow{\phi \oplus \tl{\phi}} \ZZ \oplus \tl{G} \to 0, \label{SESA2}
\end{equation} where $\phi(a_0, \ldots, a_n) = dq_0 a_0 + \cdots + d q_n a_n.$

%$X_\Sigma \cong \left((\CC^*)^{n+1} - 0 \right)/ \on{Hom}(A_{n-1}(X_\Sigma))$, where $\on{Hom}(A_{n-1}(X_\Sigma)) \cong \CC^* \times \tilde{G}$ with $\CC^*$ acting with weights $dq_1, \ldots , dq_n$.  We see that $A_{n-1}(X_\Sigma) \cong \ZZ \oplus \tilde{G}$.  

Note that $BA^T$ gives the pairing matrix between $\{\nu_0, \ldots, \nu_n\}$ and $\{\mu_0, \ldots, \mu_n\}$, \[ \left(BA^T\right)_{ij} = \langle \mu_i, \nu_j\rangle.\]  Let $\ii$ denote the $(n+1) \times (n+1)$ matrix with all entries equal to one.  Then the exponent matrix $E$ for $W$ can be expressed as $E = BA^T + \ii$.  Now, the (fractional) weights of $W^T$ are given by \[ \vec{p} =\left(\begin{array}{c} p_0 \\ \vdots \\ p_n \end{array}\right) = \left(E^T\right)\inv \left(\begin{array}{c} 1 \\ \vdots \\ 1 \end{array}\right). \]  
Recall that $\PP_{W^T} = \PP(\ol{d} p_1, \ldots, \ol{d}p_n)$, where $\ol{d}$ is the smallest integer such that $\ol{d}p_i$ is an integer for all $i$ and $\gcd(\ol{d}p_1, \ldots, \ol{d}p_n) = 1$.  We will show first that $X_{\Sigma^\vee}$ is a quotient of $\PP(\ol{d}p_1, \ldots, \ol{d}p_n).$  To see this, consider the map $\psi: \ZZ^{\Sigma^\vee} \to \ZZ$ given by $ \psi(b_0, \ldots , a_n) = \ol{d}p_0b_0 + \cdots \ol{d}p_nb_n.$  We claim that $B(N) \subseteq \ker(\psi).$ This can be rephrased as saying that $\ol{d}\vec{p}^T B = \vec{0}^T,$ which holds if and only if $AB^T \ol{d}\vec{p} = \vec{0}$ since $A$ has full rank.  By the Calabi--Yau condition for $W^T$ we know that $\sum_{i = 0}^n p_i = 1$, so the above statement is equivalent to the condition $E^T \vec{p} = AB^T \vec{p} + \ii \vec{p} = \vec{0} + \vec{1}= \vec{1}.$ But this is immediate as $E^T \vec{p} = E^T \cdot \left(E^T\right)\inv \vec{1} = \vec{1}.$ Because $\gcd(\ol{d}p_1, \ldots, \ol{d}p_n) = 1$, $\on{ker}(\psi)$ contains all linear relations in $B(N)$.  Thus we can rewrite \eqref{SESB} as 
\begin{equation}
0 \to N \xrightarrow{B} \ZZ^{\Sigma^\vee(1)} \xrightarrow{\psi \oplus \tl{\psi}} \ZZ \oplus H \to 0, \label{SESB2}
\end{equation} where $H $ is a finite group.  By lemma \ref{setup2}, this implies that $X_{\Sigma^\vee} \cong \PP_{W^T}/H$.  More precisely, the proof of lemma \ref{setup2} shows that $X_{\Sigma^\vee} = \PP_{W^T}/(M/\langle \mu_0, \ldots, \mu_n\rangle)$ and we can identify 
$H$ with $M/ \langle \mu_0, \ldots, \mu_n\rangle$.

To complete the proof of the proposition, we will now identify $M/\langle \mu_0, \ldots, \mu_n \rangle$ with $\tl{G^T}$.  Recall that the group $G^T$ can be defined as \[ G^T = \left\{ \prod_{i=0}^n \ol{\rho_i}^{s_i} \Bigg| \prod_{i=0}^n X_i^{s_i} \text{ is $G$-invariant} \right\}.\]  Given a $G$-invariant monomial $\prod_{i=0}^n X_i^{s_i}$, its degree can be expressed as $d \cdot (s_0, \ldots, s_n) E\inv \vec{1} \in d \ZZ$.  By the Calabi--Yau condition, $d \cdot (1, \ldots, 1) E\inv \vec{1} = d \sum_{i = 0}^n q_i = d$, so there exists some $k \in \ZZ$ such that $d \cdot (s_0 + k, \ldots, s_n + k) E\inv \vec{1} =0$ or in other words, such that $\prod_{i=0}^n X_i^{s_i + k}$ is degree zero.  Multiplying $\prod_{i=0}^n X_i^{s_i}$ by $\prod_{i=0}^n X_i^{k}$ corresponds to multiplying $\prod_{i=0}^n \ol{\rho_i}^{s_i}$ by $\left(j^T\right)^k$.  Therefore $\tl{G^T}$ is generated by elements of the form $\prod_{i=0}^n \ol{\rho_i}^{s_i}$ where $\prod_{i=0}^n X_i^{s_i}$ is degree 0.

Given an $n+1$-tuple $(s_0, \ldots, s_n)$ such that $\prod_{i=0}^n X_i^{s_i}$ is $G$-invariant and $\on{deg}\left(\prod_{i=0}^n X_i^{s_i}\right) = 0$,  the group element $\prod_{i=0}^n \ol{\rho_i}^{s_i}$ corresponds to the identity in $\tl{G^T}$ if, after multiplying by a multiple of $j_{W^T}$, it acts trivially on each coordinate, i.e., if \[(s_0, \ldots, s_n) E\inv + k \cdot (1, \ldots, 1) E\inv \in \ZZ^{n+1}\] for some $k \in \ZZ$.  Equivalently, $(s_0, \ldots, s_n)$ represents the trivial element if there exists a $k \in \ZZ$ such that \[(s_0, \ldots, s_n) + k(1, \ldots, 1) = (a_0, \ldots, a_n) E\] for some $a_0, \ldots, a_n \in \ZZ$.  
In this case \[\big((s_0, \ldots, s_n) + k(1, \ldots, 1)\big)E\inv \vec{1} = (a_0, \ldots, a_n) \vec{1} = \sum_{i = 0}^n a_i,\] 
and by assumption $(s_0, \ldots, s_n)E\inv \vec{1} = 0$, thus $k = k\left(1, \ldots, 1\right)E\inv \vec{1} = \sum_{i = 0}^n a_i$.  Since $(a_0, \ldots, a_n) E = (a_0, \ldots, a_n)\left( BA^T + \ii\right) = (a_0, \ldots, a_n)BA^T + \left(\sum_{i=0}^n a_i \right)(1, \ldots, 1)$, it follows that $\vec{s}^T = (s_0, \ldots, s_n)$ corresponds to the identity in $\tl{G^T}$ 
 if and only if \[ \left(\begin{array}{c} s_0 \\ \vdots \\ s_n \end{array}\right) \in \on{im}(AB^T). \] Finally, by \eqref{SESA} and the fact that $G \leq SL_{n+1}(\CC)$, 
 %{ \bf((((and the fact that $G \in SL_{n+1}$ - we need this to say that invt monomials with all positive coefficients correspond to invt degree 0 monomials by multiplying by $X_0 \cdot X_n$ this condition says that the 2 notions are equivalent, because it implies that $X_0 \cdot X_n$ is invariant under $G$)))) }, 
 the vectors $\vec{s}$ corresponding to degree zero monomials on $\PP_W$ preserved by $G$ are given by $\on{im}(A)$ in $\ZZ^{\Sigma(1)}$.  Thus $\tl{G^T}$ is identified with $\on{im}(A)/\on{im}(AB^T) \cong M/\on{im}(B^T) = M/\langle \mu_0, \ldots, \mu_n \rangle.$
 
 To see that both $H$ and $\tl{G^T}$ act in the same way on $\PP_{W^T}$, consider an element $m \in M$ representing an element of $H \cong M/ \langle \mu_0, \ldots, \mu_n\rangle$.  We can choose some 
 $\vec{r}$ in $\QQ^{n+1}$ 
 such that $B^T \vec{r} = m$.  
 Then in coordinates $m$ acts on the $Y_i$ coordinate as $e^{2 \pi i r_i}$.  Alternatively, if we view $m$ as a representative of an element of $\tl{G^T}$, then its coordinate-wise action is given by $e^{2 \pi i t_i}$ where $\vec{t} = (E^T)\inv A m$.   
 Now note that 
 \[\begin{split}
 \vec{t} &= (E^T)\inv A m 
 \\ &= (E^T)\inv A B^T \vec{r} 
 \\ &= (E^T)\inv A B^T \vec{r} + (E^T)\inv \ii \vec{r} - (E^T)\inv \ii \vec{r} 
 \\ &= (E^T)\inv E^T \vec{r}  - (E^T)\inv (\sum_{i=0}^n{r_i}) \vec{1} 
 \\ &= \vec{r} - (\sum_{i=0}^n{r_i}) \vec{p}.
 \end{split}
 \]  Thus $\vec{t}$ and $\vec{r}$ differ 
 %in their coordinate-wise action 
 by a (rational) multiple of $\ol{d}\vec{p}$.  
 But $\PP_{W^T}$ can be expressed as a quotient as $\{\CC^{n+1} - 0\}/\CC^*$ where the $\CC^*$ weights are given by $\ol{d} \vec{p}$, thus the action of $\vec{t}$ and $\vec{r}$ is the same on $\PP_{W^T}$.
 
\end{proof}

\begin{corollary}
The hypersurface $Z_{W^T}$ is isomorphic to $\{\sum_{j=0}^n \nu_j =0\} \subseteq X_{\Sigma^\vee}$.
\end{corollary}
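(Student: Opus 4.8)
The plan is to read off the corollary by combining Proposition~\ref{p:ambient} with the identity~\eqref{e:transpose}. By definition the BHK mirror is $Z_{W^T} = \{W^T = 0\} \subset \PP_{W^T}/\tl{G^T}$, and Proposition~\ref{p:ambient} already furnishes an isomorphism $X_{\Sigma^\vee} \cong \PP_{W^T}/\tl{G^T}$ of the ambient spaces. So the only thing left to verify is that, under this isomorphism, the toric hypersurface $\{\sum_{j=0}^n \nu_j = 0\} \subseteq X_{\Sigma^\vee}$ is carried onto the zero locus of $W^T$.

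First I would pin down the coordinate identification. In the proof of Proposition~\ref{p:ambient}, $\PP_{W^T}$ is realized with homogeneous coordinates $Y_0, \ldots, Y_n$ and $\tl{G^T}$ is identified with $H = M/\langle \mu_0, \ldots, \mu_n\rangle$, in such a way that $Y_i$ is the toric coordinate of $X_{\Sigma^\vee}$ attached to the ray $\RR_{\geq 0}\mu_i \in \Sigma^\vee(1)$. With this labeling fixed, each $\nu_j \in N$ defines the rational function $\prod_{i=0}^n Y_i^{\langle \mu_i, \nu_j\rangle}$ on $X_{\Sigma^\vee}$; since $\langle \mu_i, \nu_j\rangle \geq -1$, twisting by the anticanonical section $Y_0 \cdots Y_n$ turns $\nu_j$ into the genuine monomial $\prod_{i=0}^n Y_i^{\langle \mu_i, \nu_j\rangle + 1}$. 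Because $\langle \mu_i, \nu_j\rangle + 1 = e_{ij}$ is the $(j,i)$ entry of $E^T$, this monomial is exactly the $j$-th term of $W^T$, and summing over $j$ recovers~\eqref{e:transpose}, namely that the section $\sum_{j=0}^n \nu_j$ of $-K_{\Sigma^\vee}$ is the polynomial $W^T$ in $\CC[Y_0, \ldots, Y_n]$. Hence $\{\sum_j \nu_j = 0\}$ and $\{W^T = 0\}$ cut out the same subscheme.

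I expect no real obstacle: essentially all of the content is already in Proposition~\ref{p:ambient}, and the residual work is \emph{bookkeeping} --- making sure that the coordinate labeling $Y_i \leftrightarrow \mu_i$ used to write down $W^T$ is the very labeling under which Proposition~\ref{p:ambient} produces the isomorphism of ambient spaces. This guarantees that the quotient structure by $\tl{G^T}$ and the defining section are matched simultaneously, rather than only being identified separately, which is all that is needed to conclude the isomorphism $Z_{W^T} \cong \{\sum_{j=0}^n \nu_j = 0\} \subseteq X_{\Sigma^\vee}$.
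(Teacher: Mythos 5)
Your proposal is correct and follows exactly the paper's argument: invoke Proposition~\ref{p:ambient} for the isomorphism of ambient spaces and equation~\eqref{e:transpose} to identify the section $\sum_{j=0}^n \nu_j$ with $W^T$. The extra bookkeeping you supply about the coordinate labeling $Y_i \leftrightarrow \mu_i$ is a reasonable elaboration of what the paper leaves implicit, but it is the same proof.
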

\begin{proof}
By proposition~\ref{p:ambient}, $\PP_{W^T}/\tl{G^T} \cong X_{\Sigma^\vee}$.  Furthermore, by \eqref{e:transpose}, the zero set of the homogeneous function $W^T$ is exactly the vanishing locus of $\sum_{j=0}^n \nu_j$.
\end{proof}

%So in the case that $(W,G)$ is an LG model, we can rephrase BHK mirror symmetry entirely in terms of toric varieties.  First rephrase $(W, G)$ in toric language as $\{ \sum_{i=0}^n \mu_i = 0\}\subseteq X_\Sigma$ where $X_\Sigma \cong \PP_W/\tl{G}$ and $\mu_0, \ldots, \mu_n \in M$ are $n+1$ sections of the anticanonical bundle corresponding to the monomials of $W$.  Then the mirror of $\{ \sum_{i=0}^n \mu_i = 0\}\subseteq X_\Sigma$ is $\{\sum_{j=0}^n \nu_j =0\} \subseteq X_{\Sigma^\vee}$ where $\Sigma^\vee$ is defined in \ref{d:dual}, and $\nu_0, \ldots , \nu_n \in N$ are the primitive generators of the cones in $\Sigma(1)$.  This viewpoint will be useful in comparing the different BHK mirrors coming from different choices of polynomial.
%Namely, express $\PP_W/\tl{G}$ as a toric variety $X_\Sigma$, for $\Sigma$ a fan in $N$, and let $\mu_0, \ldots, \mu_n \in M = \on{Hom}(M; \ZZ)$ be the $n+1$ sections of the anticanonical bundle corresponding to the monomials of $W$.

\section{Multiple mirrors}\label{theorems}
%and the relation to BB mirror symmetry}

%Consider two LG models $(W, G)$ and $(W', G)$ such that the weights of $W$ coincide with the weights of $W'$ and $G \leq \on{Aut}(W) \cap \on{Aut}(W')$.  
%%Then $[Z_W]$ and $[Z_{W'}]$ give two different hypersurfaces in the same quotient of weighted projective space.  They are therefore equivalent up to a smooth deformation, and should be viewed as two representatives of the same mirror family.
%%$\PP_{W}/\tl{G} \cong \PP_{W'}/\tl{G'}$.  
%%Then the corresponding hypersurfaces are related by deformation.  
%Then the respective BHK mirrors $[Z_{W^T}]$ and $[Z_{{W'}^T}]$ give two different mirrors of the mirror family $[Z_W]$.  This is an example the so-called multiple mirror phenomenon.  In this section we will prove that these mirrors are in fact birational.

\begin{theorem}\label{t:main}
%Let $[Z_W]$ and $[Z_{W'}]$ be two CY hypersurfaces in $[\PP(c_0, \ldots, c_n)/\tl{G}]$ corresponding to CY singularities 
%
Given $(W, G)$ and $(W', G)$ of CY-type such that the weights of $W$ and $W'$ are $(c_0, \ldots, c_n)$ and $G \leq \on{Aut}(W) \cap \on{Aut}(W')$ (so both $[Z_W]$ and $[Z_{W'}]$ are hypersurfaces in $[\PP(c_0, \ldots, c_n)/\tl{G}]$), then  
the respective BHK mirrors $[Z_{W^T}]$ and $[Z_{{W'}^T}]$ are birational.
\end{theorem}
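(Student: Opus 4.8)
The plan is to carry out the whole argument inside the toric reformulation of Section~\ref{toricBHK}, where the birationality will emerge from an explicit open chart shared by the two mirrors.

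The crucial starting point is the final assertion of Proposition~\ref{p:2.4}: the fan $\Sigma \subset N$, the lattices $N$ and $M$, and therefore the primitive ray generators $\nu_0, \ldots, \nu_n \in N$ depend only on the weights $(c_0, \ldots, c_n)$ and on the group $G$. Since $(W,G)$ and $(W',G)$ share both, they yield the same $\Sigma$, the same $N$, $M$, and the same $\nu_j$; the only data that changes is the collection of points $\mu_i \in M$ (cut out by $\langle \mu_i, \nu_j \rangle + 1 = e_{ij}$) versus the $\mu_i'$ coming from the exponent matrix of $W'$. These give rise, via Definition~\ref{d:dual}, to different dual fans $\Sigma^\vee$ and $(\Sigma')^\vee$, hence to different ambient toric varieties $X_{\Sigma^\vee}$ and $X_{(\Sigma')^\vee}$. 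Nevertheless, $\Sigma^\vee$ and $(\Sigma')^\vee$ are fans in the \emph{same} lattice $M$, so the two ambient spaces contain one and the same dense torus $T$, whose character lattice is $N$.

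Next I would use Proposition~\ref{p:ambient} together with its corollary to write the two mirrors as $Z_{W^T} = \{\sum_j \nu_j = 0\} \subseteq X_{\Sigma^\vee}$ and $Z_{{W'}^T} = \{\sum_j \nu_j = 0\} \subseteq X_{(\Sigma')^\vee}$. Restricting the defining anticanonical section to $T$ and dividing by the unit $\prod_i Y_i$, formula \eqref{e:transpose} shows that on $T$ both hypersurfaces are cut out by the very same Laurent polynomial $\sum_{j=0}^n \chi^{\nu_j}$, where $\chi^{\nu_j}$ is the character of $T$ attached to $\nu_j \in N$; this uses only that the $\nu_j$ agree for $W$ and $W'$. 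Consequently
\[ Z_{W^T} \cap T = \bigl\{ \textstyle\sum_{j=0}^n \chi^{\nu_j} = 0 \bigr\} = Z_{{W'}^T} \cap T =: U_0 \]
is literally a common open subscheme of the two mirrors.

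It remains to see that $U_0$ is dense in each mirror and to promote the conclusion to the orbifold level, which I expect to be the only real obstacle; the existence of the shared chart $U_0$ itself is essentially free once one notes that $\Sigma$ and the $\nu_j$ are determined by the weights and by $G$. For density I would check that $Z_{W^T}$ has no component in the toric boundary: a boundary divisor $D_{\mu_i}$ lies in $Z_{W^T}$ only if $Y_i$ divides every monomial of $W^T$, and the nondegeneracy of $W^T$ supplies, for each $i$, a monomial with no factor of $Y_i$ (just as in the proof of Lemma~\ref{setup1}), so every component of $Z_{W^T}$ meets $T$ and $U_0$ is dense; the identical argument for $(W')^T$ handles $Z_{{W'}^T}$. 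For the stacky upgrade I would observe that over $T$ the ambient stack $[\PP_{W^T}/\tl{G^T}]$ is already a scheme: the generic stabilizer of $[\PP_{W^T}]$ is trivial because $\gcd(\ol{d}p_0, \ldots, \ol{d}p_n) = 1$, and $\tl{G^T}$ acts on the open torus by translations, hence freely. Thus $[Z_{W^T}]$ and $[Z_{{W'}^T}]$ each restrict over $T$ to the scheme $U_0$, so $U_0$ is a common dense open substack of both, and the identity on $U_0$ furnishes the required birational equivalence.
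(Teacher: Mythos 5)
Your proposal is correct and follows essentially the same route as the paper: both arguments observe that $\Sigma$, $M$, and the $\nu_j$ depend only on the weights and $G$, realize the two mirrors as $\{\sum_j \nu_j = 0\}$ inside two different toric compactifications $X_{\Sigma^\vee}$ and $X_{(\Sigma')^\vee}$ of the same torus $T_M$, and conclude via the common open piece $\{\sum_j \chi^{\nu_j}=0\}\cap T_M$. You are somewhat more careful than the paper in verifying density of this open piece and in passing from stacks to coarse spaces (the paper simply notes that an effective orbifold is birational to its coarse space), but the underlying argument is the same.
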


\begin{proof}
Note that given a CY-type $(W, G)$, the corresponding hypersurface $[Z_W]$ is always an effective orbifold.  In fact the set of points with nontrivial orbifold structure is a Zariski-closed subset of the space itself, thus the orbifold $[Z_W]$ is always birational to the coarse space $Z_W$.  Therefore, to prove the birationality of $[Z_{W^T}]$ and $[Z_{{W'}^T}]$ it suffices to work on the level of coarse spaces.  The theorem will follow once we show that $Z_{W^T}$ and $Z_{{W'}^T}$ are birational.

In the toric language of remark~\ref{r:toric language}, we can represent the hypersurfaces $Z_W$ and $Z_{W'}$ as $\{ \sum_{i=0}^n \mu_i = 0\}\subseteq X_\Sigma$ and $\{ \sum_{i=0}^n \mu_i ' = 0\}\subseteq X_\Sigma$ respectively.  Let $\Sigma^\vee$ denote the dual fan from definition~\ref{d:dual} with respect to $(W,G)$, so ${\Sigma^\vee}(1) = \{\mu_0, \ldots, \mu_n\}$. 
Let ${\Sigma '}^\vee$ denote the fan from definition~\ref{d:dual} with respect to $(W', G)$, so ${\Sigma '}^\vee (1) = \{\mu_0 ', \ldots, \mu_n '\}$.  Both fans are supported in $M$ ($M$ is determined only by the weights $(c_0, \ldots, c_n)$ and the choice of group $G$, see Proposition~\ref{p:2.4}).  The corresponding mirrors are given by 
\begin{equation}\label{e:mirrors}\begin{split}
Z_{W^T} &= \bigg\{\sum_{j=0}^n \nu_j =0\bigg\} \subseteq X_{\Sigma^\vee} \text{ and }
\\Z_{{W'}^T} &= \bigg\{\sum_{j=0}^n \nu_j =0\bigg\} \subseteq X_{{\Sigma '}^\vee}.
\end{split}
\end{equation}  

The toric varieties $X_{\Sigma^\vee}$ and $X_{{\Sigma '}^\vee}$ give two different compactifications of $T_M  = M \otimes \CC^*$.  By \eqref{e:mirrors} both $Z_{W^T}$ and $Z_{{W'}^T}$ contain $\{\sum_{j=0}^n \nu_j =0\} \subset T_M$ as an open subset.  This proves the claim.
%By \eqref{e:mirrors} we see that the vanishing locus of the section $\sum_{j=0}^n \nu_j $ in $ \Gamma(-K_{T_M})$ is contained as an open subset in
%both $Z_{W^T}$ and $Z_{{W'}^T}$. 
%This proves the claim.
\end{proof}

\end{document}